\documentclass[11pt,letterpaper]{amsart}
\usepackage{graphicx, psfrag, amscd, amssymb}
\usepackage[vcentermath]{youngtab} 

\textwidth=15.4cm \textheight=8.5in \oddsidemargin=0cm
\evensidemargin=0cm \topmargin=1cm

\newtheorem{lem}{Lemma}[section]
\newtheorem{thm}[lem]{Theorem}
\newtheorem{pro}[lem]{Proposition}
\newtheorem{cor}[lem]{Corollary}
\newtheorem{exa}[lem]{Example}
\newtheorem{con}[lem]{Conjecture}

\newcommand{\ZZ}{{\mathbb{Z}}}
\newcommand{\RR}{{\mathbb{R}}}

\newcommand{\M}{{\mathcal{M}}}
\newcommand{\T}{{\mathcal{T}}}

\begin{document}

\title{Standard Young Tableaux and Colored Motzkin Paths}

\author{Sen-Peng Eu}
\address{Department of Applied Mathematics, National University of Kaohsiung, and Chinese Air Force Academy, Kaohsiung 811, Taiwan, ROC}
\email{speu@nuk.edu.tw}

\author{Tung-Shan Fu}
\address{Mathematics Faculty, National Pingtung Institute of Commerce, Pingtung 900, Taiwan, ROC}
\email{tsfu@npic.edu.tw}

\author{Justin T. Hou}
\address{Department of Electrical Engineering, National Taiwan University, Taipei 106, Taiwan, ROC}
\email{b99901160@ntu.edu.tw}

\author{Te-Wei Hsu}
\address{Department of Physics, National Taiwan University, Taipei 106, Taiwan, ROC}
\email{b99202053@ntu.edu.tw}

\thanks{Research partially supported by NSC grants 101-2115-M-390-004-MY3 (S.-P. Eu) and
 101-2115-M-251-001 (T.-S. Fu).}

\begin{abstract}
In this paper, we propose a notion of colored Motzkin paths and
establish a bijection between the $n$-cell standard Young tableaux (SYT) of bounded height and the colored Motzkin paths of length
$n$. This
result not only gives a lattice path interpretation of the standard
Young tableaux but also reveals an unexpected
intrinsic relation between the set of SYTs with at most $2d+1$ rows and the set of SYTs with at most
$2d$ rows.
\end{abstract}

\maketitle

\section{Introduction}

\baselineskip=15pt

\subsection{Standard Young tableaux with bounded height}
Let $\lambda=(\lambda_1,\lambda_2,\dots)$ be a partition of $n$,
where $\lambda_1\ge\lambda_2\ge\dots\ge 0$. The \emph{shape} of
$\lambda$ is a left-justified array of cells with $\lambda_i$
cells in the $i$th row. A \emph{standard Young tableaux} (SYT) of
shape $\lambda$ is a filling of the array with $\{1,2,\dots ,n\}$
such that every row and column is increasing. The concept of SYT
is fundamental in combinatorics and representation theory. For
example, the irreducible representation of the symmetric group
$\mathfrak{S}_n$ is indexed by the SYTs of shape $\lambda$, whose
dimension can be enumerated by the celebrated hook-length
formulae~\cite{Frame_54}.

The enumeration of SYTs of bounded height is a difficult problem.
Let $\T_n^{(d)}$ denote the set of SYTs with $n$ cells and at most $d$ rows.
The nontrivial explicit formulae are only known for $d=2,3,4,5$. In
terms of representation theory, Regev \cite{Regev-1} proved
that
\[
|\T_n^{(2)}|={n\choose \lfloor \frac{n}{2}\rfloor} \qquad \mbox{and} \qquad |\T_n^{(3)}|=\sum_{i\ge 0}\frac{1}{i+1}{n\choose 2i}{2i\choose i}.
\]
Gouyou-Beauchamps \cite{Gouyou} derived combinatorially that
\[
|\T_n^{(4)}|=c_{\lfloor \frac{n+1}{2}\rfloor}c_{\lceil
\frac{n+1}{2}\rceil} \qquad \mbox{and} \qquad
|\T_n^{(5)}|=6\sum_{i=0}^{\lfloor \frac{n}{2}\rfloor}{n\choose
2i}\frac{(2i+2)!c_i}{(i+2)!(i+3)!},
\] where
$c_n=\frac{1}{n+1}{2n\choose n}$ is the $n$th Catalan
number.

Though explicit formulae are elusive for $d\ge 6$, some generating functions for $|\T_n^{(d)}|$
are known. Gessel \cite{Gessel} proved that the exponential
generating functions for $|\T_n^{(2d+1)}|$ and $|\T_n^{(2d)}|$ are
\[
e^z\det[I_{i-j}(2z)-I_{i+j}(2z)]^{d}_{i,j=1} \qquad \mbox{and} \qquad \det[I_{i-j}(2z)+I_{i+j-1}(2z)]^{d}_{i,j=1}
\]
respectively, where $I_m(2z)=\sum_{j\ge 0}
\frac{z^{m+2j}}{j!(m+j)!}$ is the hyperbolic Bessel function of the
first kind of order $m$; see also \cite[section 5]{Stanley-paper}
for more information.

\subsection{Motzkin paths}
Note that $|\T_n^{(3)}|$ is a Motzkin number, which counts
the number of Motzkin paths of length $n$ . A \emph{Motzkin path} of
length $n$ is a lattice path in the plane $\RR^2$ from the origin to
the point $(n,0)$ never going below
the $x$-axis, with the \emph{up steps} $(1,1)$, \emph{down
steps} $(1,-1)$ and \emph{level steps} $(1,0)$ .

In previous work \cite{Eu}, the first author established a bijection
between the Motzkin paths of length $n$ and the $n$-cell SYTs with at most three rows, which offers a
combinatorial proof of Regev's result.
Moreover, an extension of this connection was also considered in \cite{Eu}. Let
$\{\epsilon_1,\dots,\epsilon_{d+1}\}$ denote the standard basis of
$\RR^{d+1}$.  Combining analytic work of Grabiner-Magyar
\cite{Grab-Mag} and Gessel \cite{Gessel}, the set $\T_n^{(2d+1)}$ is related to the following lattice path
enumeration in $\RR^{d+1}$, which is equivalent to enumeration of
Zeilberger's lazy walks (defined below).

\begin{thm} \label{thm:1-1}
Using the following steps as the admissible unit steps
\[
\{\epsilon_1\}\cup\{\epsilon_1+\epsilon_2,\epsilon_1-\epsilon_2\}\cup\{\epsilon_1-\epsilon_i+\epsilon_{i+1},
\epsilon_1+\epsilon_i-\epsilon_{i+1}:2\le i\le d\},
\]
the number of paths of
length $n$ from the origin to the point $(n,0,\dots,0)$ staying
within the nonnegative octant equals the number of $n$-cell SYTs with at most
$2d+1$ rows.
\end{thm}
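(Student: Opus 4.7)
Every admissible step in the theorem increments the first coordinate by exactly $1$, so after $n$ steps the walk reaches a point of the form $(n,x_2,\dots,x_{d+1})$ and the terminal condition is really a condition on the last $d$ coordinates alone. Projecting out $\epsilon_1$, the enumeration reduces to counting length-$n$ walks from the origin to the origin in $\RR^d$ with admissible increments
\[
\mathbf{0},\quad \pm \epsilon_2,\quad \pm(\epsilon_i-\epsilon_{i+1})\ (2\le i\le d),
\]
staying within the nonnegative orthant $x_2,\dots,x_{d+1}\ge 0$.

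My plan is then to recognize this as a standard Zeilberger lazy walk of rank $d$ via the invertible change of variables $w_k:=x_{k+1}+x_{k+2}+\cdots+x_{d+1}$ for $k=1,\dots,d$. A direct check shows that under this linear transformation the null step remains null, the step $\pm\epsilon_2$ becomes $\pm\mathbf{e}_1$, and each step $\pm(\epsilon_i-\epsilon_{i+1})$ becomes $\mp\mathbf{e}_i$; moreover the orthant constraints $x_{k+1}\ge 0$ rewrite as the chain $w_1\ge w_2\ge\cdots\ge w_d\ge 0$, i.e.\ the fundamental Weyl chamber of type $B_d$. Thus the walks of the theorem are in bijection with length-$n$ lazy walks from the origin to itself in the $B_d$ chamber whose non-lazy step set is $\{\pm\mathbf{e}_1,\dots,\pm\mathbf{e}_d\}$.

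To finish, I would invoke Grabiner-Magyar's reflection-principle determinantal formula for walks in Weyl chambers. Applied to the step set $\{\pm\mathbf{e}_j\}_{j=1}^d$ in the type-$B_d$ chamber, that formula yields $\det[I_{i-j}(2z)-I_{i+j}(2z)]_{i,j=1}^d$ as the exponential generating function for non-lazy walks from the origin to itself. Inserting an independent lazy step at each time slot multiplies this EGF by $e^z$, giving
\[
e^z\det[I_{i-j}(2z)-I_{i+j}(2z)]_{i,j=1}^d,
\]
which is precisely Gessel's exponential generating function for $|\T_n^{(2d+1)}|$ recalled in Section~1.1. Comparing coefficients of $z^n/n!$ on both sides completes the proof.

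The main obstacle is the careful bookkeeping in the two-step reduction (projection followed by reverse-partial-sum change of variables) and the identification of Grabiner-Magyar's determinant with Gessel's formula. Once the non-lazy step set $\{\pm\mathbf{e}_j\}$ is recognized as the nonzero weights of the defining representation of $W(B_d)$ on $\RR^d$ and the chamber walls as the positive walls for $B_d$, the determinantal identity becomes a routine but delicate application of the reflection principle summed over $W(B_d)$, with the $e^z$ factor accounting exactly for the zero weight contributed by the lazy step.
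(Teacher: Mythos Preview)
Your argument is correct, but it reproduces precisely the analytic derivation that the paper attributes in its introduction to Grabiner--Magyar and Gessel, rather than the paper's own contribution. You reduce the walks of Theorem~\ref{thm:1-1} to Zeilberger-type lazy walks in the closed type-$B_d$ chamber (via projection and the reverse-partial-sum change of variables, both of which you carry out correctly), and then match the Grabiner--Magyar reflection determinant, multiplied by the $e^z$ factor coming from the lazy step, against Gessel's exponential generating function for $|\T_n^{(2d+1)}|$.

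The paper proves Theorem~\ref{thm:1-1} by an entirely different, purely combinatorial route. The walks of the theorem are first identified with the set $\M_n^{(d)}$ of $d$-Motzkin words; this identification is equivalent to your change of variables, just rephrased as the dictionary between the letters $L,U^{(i)},D^{(i)}$ and the admissible unit steps. The heart of the paper is then the explicit bijection $\phi_d:\M_n^{(d)}\to\T_n^{(2d+1)}$ of Theorem~\ref{thm:main-bijection}, built inductively on $d$ through an iterative step-replacement algorithm that hinges on the notions of \emph{critical} and \emph{exceeding} up steps. This buys considerably more than the bare equality of cardinalities: the bijection refines to send $\widehat\M_n^{(d)}$ onto $\widetilde\T_n^{(2d)}$ and $\overline\M_n^{(d)}$ onto $\widetilde\T_n^{(2d+1)}$, thereby proving Conjecture~\ref{cor:1-2} in the same stroke, and it carries the statistic ``number of level steps'' to ``number of odd columns'' (Corollary~\ref{cor:5-5}). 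Your generating-function comparison is much shorter and requires no new machinery, but it is not bijective and cannot see these refinements; indeed, supplying a bijective proof of the identity you invoke is exactly what the paper sets out to do.
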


For SYTs with at most $2d$
rows, the following refined enumeration was conjectured in \cite{Eu}.

\begin{con} \label{cor:1-2}
If the steps $\epsilon_1$ are confined to the hyperplane spanned by $\{\epsilon_1,\dots,\epsilon_d\}$,
then the number of paths equals the number of $n$-cell SYTs with at most $2d$
rows.
\end{con}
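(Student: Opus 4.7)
The plan is to deduce Conjecture \ref{cor:1-2} from the colored Motzkin path bijection announced in the abstract of this paper. Since every admissible step in Theorem \ref{thm:1-1} contributes $+1$ to the first coordinate, one may strip off that coordinate and view the walks as lying in $\ZZ_{\ge 0}^{d}$ in the coordinates $(y_2,\dots,y_{d+1})$, with steps $\mathbf{0}$ (from $\epsilon_1$), $\pm\epsilon_2$, and $\pm(\epsilon_i-\epsilon_{i+1})$ for $2\le i\le d$. The hyperplane restriction in the conjecture then reads: the trivial step $\mathbf{0}$ is taken only at positions with $y_{d+1}=0$.

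Next, I would project each such walk onto the coordinate $y_2$, obtaining a Motzkin path whose up and down steps come from $\pm\epsilon_2$ and whose level steps carry colors encoding both the type of the original step (one color for $\mathbf{0}$, two colors for each swap $\pm(\epsilon_i-\epsilon_{i+1})$) and the instantaneous values of $y_3,\dots,y_{d+1}$. This realizes the walks of Theorem \ref{thm:1-1} as a specific family of colored Motzkin paths, which by the paper's main bijection are counted by $|\T_n^{(2d+1)}|$.

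The remaining task is to identify, inside this family, the colored Motzkin paths corresponding to $\T_n^{(2d)}$. I expect that the $(2d+1)$-st row of an SYT is activated exactly when, at the matching step of its colored Motzkin image, the auxiliary coordinate $y_{d+1}$ is strictly positive. Therefore forbidding the plain $\mathbf{0}$-step whenever $y_{d+1}>0$ should remove precisely those colored paths whose preimage uses row $2d+1$, which would establish the conjecture.

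The main obstacle is in this last step: locating, in the recursive description of the colored Motzkin bijection, a clean local criterion for when the $(2d+1)$-st row of the corresponding SYT is used, and verifying that it coincides with the event ``$y_{d+1}>0$ at a $\mathbf{0}$-step.'' I anticipate this follows by induction on $d$ from the structure of the bijection, but the details will likely demand careful bookkeeping of the color data along the path.
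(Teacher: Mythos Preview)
Your guessed criterion is exactly right, but the encoding step is a detour that does not match the paper's setup. The paper's $d$-Motzkin paths are \emph{not} obtained by projecting to $y_2$ and coloring the level steps; instead every $U^{(i)}$ (resp.\ $D^{(i)}$) is drawn as an up (resp.\ down) step of color $i$, so the height tracks $\sum_i z_i$ with $z_i=|\mu|_{U^{(i)}}-|\mu|_{D^{(i)}}$, and only the single step $L$ is level (and uncolored). Under the change of variables $y_{i+1}=z_i-z_{i+1}$ for $1\le i\le d-1$ and $y_{d+1}=z_d$, the nonnegative-octant walks of Theorem~\ref{thm:1-1} are precisely these $d$-Motzkin words, and your condition ``$y_{d+1}>0$ at a $\mathbf 0$-step'' becomes ``$z_d>0$ at an $L$-step,'' i.e.\ the level step is enclosed in a matching $(U^{(d)},D^{(d)})$-pair. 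That is literally the defining condition of $\overline{\M}^{(d)}_n$ in Theorem~\ref{thm:main-bijection}, so once the correct encoding is in place, the refinement you want is exactly the statement that $\phi_d$ restricts to bijections $\overline{\M}^{(d)}_n\to\widetilde{\T}^{(2d+1)}_n$ and $\M^{(d-1)}_n\cup\widehat{\M}^{(d)}_n\to\T^{(2d)}_n$. (Your remark that the colors also encode ``the instantaneous values of $y_3,\dots,y_{d+1}$'' is redundant: those values are determined by the step sequence.)

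What remains---and what you correctly flag as the obstacle---cannot be read off from an abstract description of the bijection; it is the content of the construction itself. The paper builds $\phi_d$ by an explicit iterative algorithm (Algorithms~A, C, E): one repeatedly locates the first level step enclosed in a $(U^{(d)},D^{(d)})$-pair, follows a greedy left-to-right search (governed by the ``critical'' up steps of Section~3) to find an associated chain $(D^{(d)},\dots,D^{(1)})$, and replaces the terminal $D^{(1)}$ by the letter $2d{+}1$; the analogous step (E2), starting from the first $U^{(d)}$, produces the letters $2d$. That the resulting word is a Yamanouchi word, that a letter $2d{+}1$ appears if and only if such an enclosed level step existed, and that the whole process inverts (via the ``exceeding'' up steps and Algorithms~D, F) are checked case by case in Propositions~\ref{pro:one-iteration}--\ref{pro:d=2--Backward} and~\ref{pro:Left->Right(2d+1)}--\ref{pro:d>=3--Backward}. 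Your anticipated induction on $d$ is indeed the paper's scheme, but the ``careful bookkeeping'' you foresee is substantial and occupies Sections~4--5 in full.
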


For $d=1$, this conjecture is supported by the fact that the number
of Motzkin paths of length $n$ with all level steps on the $x$-axis,
which is the central binomial number ${n\choose \lfloor \frac{n}{2}
\rfloor}$, equals the number of $n$-cell SYTs with at most two rows.

\medskip

Motivated by this study, in the present work, we propose a notion of
colored Motzkin paths in terms of shuffle of multiple parenthesis
systems and give a combinatorial proof of Theorem \ref{thm:1-1} and
Conjecture \ref{cor:1-2}.

Our results not only establish a bijection between SYTs and colored Motzkin paths, but also reveal a link between
the two sets $\T_n^{(2d+1)}$ and $\T_n^{(2d)}$ via a property of level steps of the paths (Theorem \ref{thm:main-bijection}).

\subsection{The $d$-Motzkin words and $d$-Motzkin paths}
A \emph{word} $\pi$ on a set $X$ (called the \emph{alphabet}) is a
finite sequence of letters in $X$. The \emph{length} of $\pi$,
denoted by $|\pi|$, is the number of letters in $\pi$. For a
letter $x\in X$, the number of occurrences of $x$ in $\pi$ is
denoted by $|\pi|_x$. A word $\pi$ on the alphabet $\{x, \overline{x}\}$
is called a \emph{parenthesis system} if $|\pi|_x=|\pi|_{\overline{x}}$ and for every prefix
$\mu$ of $\pi$, $|\mu|_x\ge |\mu|_{\overline{x}}$.

\smallskip
Consider the alphabet $X=\{L\}\cup\{U^{(i)},D^{(i)}: 1\le i\le d\}$.
A word $\pi$ on $X$ is called a \emph{$d$-Motzkin word} if it satisfies the following two conditions:
\begin{enumerate}
\item[(M1)] $|\pi|_{U^{(i)}}=|\pi|_{D^{(i)}}$, for all $1\le i\le d$;
\item[(M2)] for every prefix $\mu$,
$|\mu|_{U^{(1)}}-|\mu|_{D^{(1)}}\ge |\mu|_{U^{(2)}}-|\mu|_{D^{(2)}}\ge\cdots\ge |\mu|_{U^{(d)}}-|\mu|_{D^{(d)}}\ge 0$.
\end{enumerate}
Such a word can be visualized in terms of Motzkin paths with
colored steps. We associate the letter $U^{(i)}$ (respectively,
$D^{(i)}$) to an up step $(1,1)$ (respectively, down step
$(1,-1)$) with the $i$th color, for $1\le i\le d$, and associate
the letter $L$ to a level step $(1,0)$ without any color. Then a
$d$-Motzkin word of length $n$ is associated to a lattice path
from $(0,0)$ to $(n,0)$ never going below the $x$-axis, with these
colored up and down steps and (uncolored) level steps. Such a path
is called a $d$-\emph{Motzkin path} of length $n$.  When $d=1$, it
is an ordinary Motzkin path. When $d\ge 2$, it is equivalent to a
lattice path in $\RR^{d+1}$ mentioned in Theorem \ref{thm:1-1}.
(Note that when $d=2$, the $d$-Motzkin paths defined above is
different from the notion of 2-Motzkin paths appeared in
\cite{Deutsch-Shapiro}, with two kinds of level steps, say
straight and wavy.) In this paper, we will use $d$-Motzkin words
and $d$-Motzkin paths interchangeably.

\smallskip
Let $\M^{(d)}_n$ denote the set of $d$-Motzkin paths of length
$n$. Given a path $\pi=x_1x_2\cdots x_n\in\M^{(d)}_n$, an ordered
pair $(x_i,x_j)=(U^{(k)},D^{(k)})$ is called a \emph{matching
pair} if $j$ is the least integer such that for the subword
$\omega=x_ix_{i+1}\cdots x_j$ of $\pi$,
$|\omega|_{U^{(k)}}=|\omega|_{D^{(k)}}$. For a matching pair
$(x_a,x_b)$ with $i<a<b<j$, we say that $(x_a,x_b)$ is
\emph{nested} in $(x_i,x_j)$. For a level step $x_c$ with $i<c<j$,
we say that $x_c$ is \emph{enclosed} in the matching pair
$(x_i,x_j)$. Note that a level step $x_c$ of $\pi$ is enclosed in
a matching $(U^{(d)},D^{(d)})$-pair if for the prefix
$\mu=x_1\cdots x_c$, $|\mu|_{U^{(d)}}>|\mu|_{D^{(d)}}$.

From (M1) and (M2), the $d$-Motzkin paths satisfy the following two conditions (N1) and (N2) necessarily.

\begin{enumerate}
\item[(N1)] The subword of $\pi$ consisting of the steps $U^{(k)}$ and $D^{(k)}$ is a parenthesis system, for all $1\le k\le d$.
\item[(N2)] Every matching $(U^{(i+1)},D^{(i+1)})$-pair of $\pi$ is nested in a matching $(U^{(i)},D^{(i)})$-pair, for all $1\le i\le d-1$.
\end{enumerate}

\subsection{Main result}

The main result of this paper is a bijection
$\phi_d$ between $\M^{(d)}_n$ and $\T^{(2d+1)}_n$ with a refinement, which gives a proof of Theorem \ref{thm:1-1} and Conjecture \ref{cor:1-2}.

We partition the set $\M^{(d)}_n$ into three subsets $\M^{(d)}_n=\M^{(d-1)}_n\cup \widehat{\M}^{(d)}_n\cup \overline{\M}^{(d)}_n$, where
\begin{eqnarray*}
\widehat{\M}^{(d)}_n   &=& \{\pi\in\M^{(d)}_n: \mbox{ no level steps of $\pi$ are enclosed in a matching $(U^{(d)},D^{(d)})$-pair} \},  \\
\overline{\M}^{(d)}_n &=& \{\pi\in\M^{(d)}_n: \mbox{ some level steps of $\pi$ are enclosed in a matching $(U^{(d)},D^{(d)})$-pair} \}.
\end{eqnarray*}
Note that the paths in $\widehat{\M}^{(d)}_n$ and
$\overline{\M}^{(d)}_n$ contain $(U^{(d)},D^{(d)})$-pairs. We
assume that $\M^{(0)}_n$ consists of the unique path with $n$
level step.  On the other hand, let
$\widetilde{\T}^{(d)}_n\subseteq\T^{(d)}_n$ be the subset of SYTs
with exactly $d$ rows. Here is our main result.

\begin{thm} \label{thm:main-bijection} For $d\ge 1$, there is a bijection $\phi_d$
between $\M^{(d)}_n$ and $\T^{(2d+1)}_n$. Specifically, the map
$\phi_d$ induces a bijection between $\widehat{\M}^{(d)}_n$ and
$\widetilde{\T}^{(2d)}_n$ and a bijection between
$\overline{\M}^{(d)}_n$ and $\widetilde{\T}^{(2d+1)}_n$.
\end{thm}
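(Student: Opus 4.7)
The plan is to construct $\phi_d$ by induction on $d$, matching the three-part partitions
\[
\M^{(d)}_n = \M^{(d-1)}_n \cup \widehat{\M}^{(d)}_n \cup \overline{\M}^{(d)}_n \quad \text{and} \quad \T^{(2d+1)}_n = \T^{(2d-1)}_n \cup \widetilde{\T}^{(2d)}_n \cup \widetilde{\T}^{(2d+1)}_n
\]
block by block. The base $d=0$ is trivial (both sides are singletons), and the case $d=1$ is the Motzkin/three-row-SYT bijection from \cite{Eu}, which already exhibits the refinement by the presence of arches and of level steps enclosed in arches.

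For the inductive step I decompose $\pi \in \M^{(d)}_n$ by its ``deepest-level structure''. If $\pi$ contains no $U^{(d)}, D^{(d)}$ letter then $\pi \in \M^{(d-1)}_n$, and I set $\phi_d(\pi) := \phi_{d-1}(\pi) \in \T^{(2d-1)}_n \subseteq \T^{(2d+1)}_n$. Otherwise, I isolate the subword $\rho$ of $\pi$ consisting of all $U^{(d)}$, $D^{(d)}$ letters together with the level steps enclosed in matching $(U^{(d)}, D^{(d)})$-pairs; by (N2) these are the innermost letters, so the complementary subword $\sigma$ is (after renumbering positions) a valid $(d-1)$-Motzkin word. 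Applying $\phi_{d-1}$ to $\sigma$, with the labels inherited from the original positions in $\pi$, fills rows $1$ through $2d-1$, and the positions in $\rho$ are then inserted into rows $2d-1, 2d, 2d+1$ so that the $U^{(d)}$-positions extend row $2d-1$ to the right, the $D^{(d)}$-positions fill row $2d$, and the enclosed $L$-positions populate row $2d+1$. The refinement then falls out of the construction: row $2d$ is nonempty iff $\pi$ has a $(U^{(d)}, D^{(d)})$-pair, and row $2d+1$ is nonempty iff $\pi \in \overline{\M}^{(d)}_n$, giving $\widehat{\M}^{(d)}_n \mapsto \widetilde{\T}^{(2d)}_n$ and $\overline{\M}^{(d)}_n \mapsto \widetilde{\T}^{(2d+1)}_n$.

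The inverse $\phi_d^{-1}$ reads off rows $2d$ and $2d+1$ of $T$: if both are empty, it applies $\phi_{d-1}^{-1}$ directly; otherwise, it recovers $\rho$ from rows $2d-1, 2d, 2d+1$ (reconstructing the $U^{(d)}, D^{(d)}, L$ letters from the row labels), applies $\phi_{d-1}^{-1}$ to the remaining tableau to recover $\sigma$, and shuffles the two back into a single word on $\{L\} \cup \{U^{(i)}, D^{(i)}\}_{i=1}^d$. One must verify that the resulting word satisfies (M1) and (M2), but this is forced by the relative order of labels coming from $\rho$ versus $\sigma$, together with the inductive hypothesis on $\sigma$.

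The main obstacle is specifying the insertion rule in the inductive step so that the filling is both column-strict and of valid partition shape. The naive assignment fails because the number of level steps enclosed in matching $(U^{(d)}, D^{(d)})$-pairs can exceed the number of such pairs, which would force row $2d+1$ to be longer than row $2d$. The correct rule must therefore reroute certain enclosed level steps from row $2d+1$ into lower rows via the shuffle-of-parenthesis-systems viewpoint alluded to in the introduction: since by (N2) every $(U^{(d)}, D^{(d)})$-pair is nested in some $(U^{(d-1)}, D^{(d-1)})$-pair, an enclosed $L$ can be matched with an appropriate letter of $\sigma$ at the $(d-1)$-level and redistributed accordingly. Making this rule precise, and checking both that it preserves column-strictness and that it is reversible, is the technical heart of the argument and what ultimately yields both Theorem \ref{thm:1-1} and Conjecture \ref{cor:1-2}.
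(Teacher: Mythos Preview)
Your high-level inductive framework matches the paper's: both construct $\phi_d$ by induction on $d$, with $\phi_d$ restricting to $\phi_{d-1}$ on $\M^{(d-1)}_n$ and sending $\widehat{\M}^{(d)}_n \to \widetilde{\T}^{(2d)}_n$, $\overline{\M}^{(d)}_n \to \widetilde{\T}^{(2d+1)}_n$. However, your concrete mechanism --- the static decomposition $\pi \mapsto (\sigma,\rho)$, applying $\phi_{d-1}$ to $\sigma$ and then inserting the $\rho$-positions into rows $2d-1,2d,2d+1$ --- is not what the paper does, and as stated it does not work. The failure is more basic than the row-length issue you flag. Take $d=2$ and $\pi = U^{(1)}U^{(2)}D^{(2)}D^{(1)}U^{(1)}D^{(1)}$. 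Then $\sigma$ is $U^{(1)}D^{(1)}U^{(1)}D^{(1)}$ at positions $1,4,5,6$, and $\phi_1(\sigma)$ has row~$1=\{1,5\}$, row~$2=\{4,6\}$. Placing the $U^{(2)}$-position $2$ in row~$3$ and the $D^{(2)}$-position $3$ in row~$4$ already violates column-strictness (column~$1$ reads $1,4,2,3$). Once $\phi_{d-1}$ has committed $\sigma$-positions to rows $1,\dots,2d-1$, there is in general no way to slot $\rho$-positions beneath them; any ``rerouting'' must disturb rows already fixed by $\phi_{d-1}(\sigma)$, which destroys the clean $\sigma/\rho$ separation you are relying on. You acknowledge that the insertion rule is ``the technical heart'', but in fact the decomposition itself is the wrong scaffolding.

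The paper's route is genuinely different. It does not split off a fixed subword $\rho$; instead it runs an iterative step-replacement (Algorithms C and E). Each iteration locates the first level step enclosed in a $(U^{(d)},D^{(d)})$-pair (resp.\ the first $U^{(d)}$), then greedily traces a down-step sequence $(x_{b_d},\dots,x_{b_1})=(D^{(d)},\dots,D^{(1)})$ to its right, replaces $x_{b_1}$ by the letter $2d{+}1$ (resp.\ $2d$), and simultaneously \emph{degrades} certain $(U^{(k)},D^{(k)})$-pairs encountered along the way to color $k-1$. The governing notion is that of a \emph{critical} up step $x_j=U^{(k)}$, one with $|\mu(j)|_{U^{(k)}}-|\mu(j)|_{D^{(k)}}=|\mu(j)|_{U^{(k-1)}}-|\mu(j)|_{D^{(k-1)}}$; these are precisely the steps that would become illegal under (M2) after a naive degrade, and the algorithm recursively degrades them too. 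Because colors are rewritten at every iteration, what plays the role of your ``$\sigma$'' changes dynamically, so no static decomposition underlies the map. The substantive work (Propositions \ref{pro:one-iteration}--\ref{pro:d=2--Backward} and \ref{pro:Left->Right(2d+1)}--\ref{pro:d>=3--Backward}) is a case analysis showing that each iteration yields a valid $d$-Motzkin path of shorter length, that the produced letters $2d{+}1$ (resp.\ $2d$) appear left to right, and that the dual notion of \emph{exceeding} up steps inverts the procedure.
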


We shall construct the bijection $\phi_d$ inductively on $d$. For
the initial stage, a map $\phi_1$ has been established in
\cite{Eu}. Using an approach of step-replacement, we give an
alternative construction of $\phi_1$ in section 2, which could be
generalized to $\phi_d$. We describe briefly our strategy for the
construction of $\phi_d$ ($d\ge 2$) in section 3. A precise
algorithm for the construction of $\phi_2$ is given in section 4,
and a precise algorithm for the construction of $\phi_d$ ($d\ge
3$) is given in section 5. For illustrating the constructions, Figure \ref{fig:Motzkin-word} is for $\phi_1$ (Example
\ref{exa:d=1}) and $\phi_1^{-1}$ (Example
\ref{exa:d=1--Backward}), Figure \ref{fig:2-Motzkin-path} is for
$\phi_2$ (Example \ref{exa:d=2}) and $\phi_2^{-1}$ (Example
\ref{exa:d=2--Backward}), and Figures \ref{fig:critical-crucial}
and \ref{fig:critical-up-step-25} are for $\phi_3$ (Example
\ref{exa:d=3}).

\subsection{Related structures of colored Motzkin words} The notion of $d$-Motzkin words is in connection to a
number of objects.

\begin{itemize}
\item \emph{Lazy walks.}
For $1\le i\le d$, if we associate the steps
$U^{(i)}$ and $D^{(i)}$ to $\epsilon_i$ and $-\epsilon_i$,
respectively, and associate the level step to the zero vector,
then the $d$-Motzkin words of length $n$ correspond to Zeilberger's
lattice walks of length $n$ in the region $\{(z_1,\dots,z_d) :
z_1\ge \cdots\ge z_d\ge 0\}\subseteq\ZZ^{d}$, starting and ending at
the origin, where for each step you may either stand still or move
one unit in any direction.
\item \emph{Nonnesting matchings/Oscillating tableaux.}
Restricted to the steps $\{U^{(i)},D^{(i)}: 1\le i\le d\}$, the
notion of $d$-Motzkin paths without level steps is equivalent to the
notion of $(d+1)$-nonnesting matchings and the notion of oscillating
tableaux of empty shape and height $d$, where $U^{(i)}$ is adding
one cell to (respectively, $D^{(i)}$ is removing it from) the $i$th
row. We refer the readers to \cite{Chen-Deng} for their definitions.
\item \emph{Walks in Weyl chamber.}
The paths with strict version of the condition (M2),
$|\mu|_{U^{(1)}}-|\mu|_{D^{(1)}}> \cdots>
|\mu|_{U^{(d)}}-|\mu|_{D^{(d)}}> 0$, are related to the oscillating
lattice walks in the $d$-dimensional Weyl chamber
$\{(z_1,\dots,z_d) : z_1> \cdots> z_d> 0\}\subseteq\ZZ^{d}$ studied
in \cite{Xin}.
\end{itemize}

\section{The initial stage}
We identify a tableau $T\in\T^{(d)}_n$ with its {\em Yamanouchi word}, a word of length $n$ on the alphabet $\{1, 2,\dots,d\}$ the $j$th letter of which is the row index of the cell of $T$ containing the number $j$, for $1\le j\le n$. For example,
\begin{equation*}
T=\young(136,257,48)
\quad \longleftrightarrow \quad 12132123.
\end{equation*}
Note that the $i$th row of $T$ contains the indices of the letters $i$ in its Yamanouchi word.

In this section, we shall demonstrate a construction of the map $\phi_1$ using an approach of step-replacement.

\subsection{The map $\phi_1$}
Given a Motzkin path $\pi=x_1x_2\cdots x_n\in\M^{(1)}_n$, we shall construct a word $\phi_1(\pi)$ of length $n$ by the following procedures, replacing each step $x_i$ by a letter in $\{1,2,3\}$.
Note that a level step is enclosed in a matching $(U^{(1)},D^{(1)})$-pair if and only if it is above the $x$-axis. So, if $\pi\in\overline{\M}^{(1)}_n$ then $\pi$ contains some level steps above the $x$-axis, and if $\pi\in\widehat{\M}^{(1)}_n$ then all of the level steps in $\pi$ are on the $x$-axis.
As the construction proceeds, $\pi$ would become a sequence of steps $\{L,U^{(1)},D^{(1)}\}$ and letters $\{1,2,3\}$. By the path $\pi$ we mean the path-part of $\pi$ whose length is the number of its steps.

\noindent
{\bf Algorithm A.}
\begin{enumerate}
\item[(A1)] $\pi\in\overline{\M}^{(1)}_n$. Then find the first level step above the $x$-axis, say $x_a$, and the first $D^{(1)}$ step after $x_a$, say $x_b$. Form a new sequence $\pi'$
from $\pi$ by replacing the pair $(x_a,x_b)$ by  $(D^{(1)},3)$. If $\pi'$ contains a level step above the $x$-axis then go back to (A1) and proceed to process $\pi'$, otherwise go to (A2).
\item[(A2)] $\pi\in\widehat{\M}^{(1)}_n$. Then find the first $U^{(1)}$ step, say $x_a$, and the first $D^{(1)}$ step after $x_a$, say $x_b$. Form a new sequence $\pi'$ from $\pi$ by replacing the pair $(x_a,x_b)$ by $(L,2)$. If $\pi'$ contains a $U^{(1)}$ step then go back to (A2) and proceed to process $\pi'$, otherwise go to (A3).
\item[(A3)] $\pi$ consists of level steps. Then replace every step of $\pi$ by a letter 1 and we are done.
\end{enumerate}

\begin{exa} \label{exa:d=1} {\rm
 Take the Motzkin path $\pi=x_1\cdots x_{12}$ shown in Figure \ref{fig:Motzkin-word}(a), where the colors of the up and down steps are indicated on their top. For convenience, we follow the notations used in algorithm A. Since the level step $x_3$ is above the $x$-axis, we have $\pi\in\overline{\M}^{(1)}_{12}$. By (A1), we locate the steps $(x_a,x_b)=(x_3,x_4)$ (iteratively, $(x_5,x_7)$). Then we replace $(x_a,x_b)$ by  $(D^{(1)},3)$. The resulting path is in $\widehat{\M}^{(1)}_{10}$; see Figure \ref{fig:Motzkin-word}(b).

  By (A2), we locate the steps $(x_a,x_b)=(x_1,x_3)$ (iteratively, $(x_2,x_5)$, $(x_6,x_8)$, $(x_9,x_{11})$ and $(x_{10},x_{12})$). For each one of these pairs,  we replace $(x_a,x_b)$ by $(L,2)$. We obtain the path shown in Figure \ref{fig:Motzkin-word}(c). Replacing each one of the remaining level steps by a letter 1, we obtain the requested word $\phi_1(\pi)=1 1 2 3 2 1 3 2 1 1 2 2 \in\T^{(3)}_{12}$.
 }
\end{exa}

\begin{figure}[ht]
\begin{center}
\includegraphics[width=2.35in]{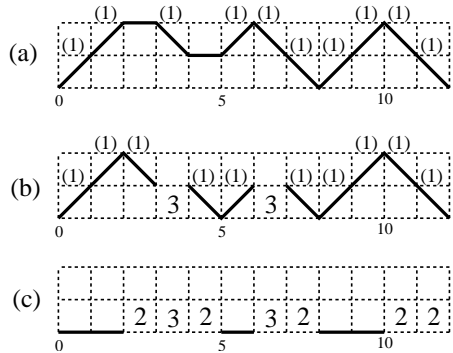}
\end{center}
\caption{\small An illustration for Example \ref{exa:d=1}.} \label{fig:Motzkin-word}
\end{figure}

It is easy to see that the path-part of the new sequence $\pi'$ formed in (A1) and (A2) remains to be a Motzkin path. Hence over iterations, we construct the word $\phi_1(\pi)\in\T^{(3)}_n$. Specifically, $\phi_1(\pi)\in\widetilde{\T}^{(3)}_n$ if $\pi\in\overline{\M}^{(1)}_n$, and $\phi_1(\pi)\in\widetilde{\T}^{(2)}_n$ if $\pi\in\widehat{\M}^{(1)}_n$. For a word $\phi_1(\pi)$ with more than one letter $3$  (respectively, $2$), we observe that the letters 3 (respectively, 2) appear in the order from left to right iteratively.

\subsection{The map $\phi_1^{-1}$}

Given a word $\omega=x_1x_2\cdots x_n\in\T^{(3)}_n$, we recover the Motzkin path $\phi_1^{-1}(\omega)\in\M^{(1)}_n$ by the following procedures.

\noindent
{\bf Algorithm B.}
\begin{enumerate}
\item[(B1)] First, we replace every letter 1 of $\omega$ by a level step.
\item[(B2)] Traversing $\omega$ from right to left, find the rightmost letter 2, say $x_b$, and the nearest level step to the left of $x_b$, say $x_a$. Then replace the pair $(x_a,x_b)$ by $(U^{(1)},D^{(1)})$. Go back to (B2) and repeat the process until there are no letters 2 in $\omega$.
\item[(B3)] Traversing $\omega$ from right to left, find the rightmost letter 3, say $x_b$, and the nearest $D^{(1)}$ step to the left of $x_b$, say $x_a$. Then replace the pair $(x_a,x_b)$ by $(L,D^{(1)})$. Go back to (B3) and repeat the process until there are no letters 3 in $\omega$.
\end{enumerate}
Note that the construction of $\phi_1^{-1}$ is exactly the reverse operation of the construction of $\phi_1$.

\begin{exa} \label{exa:d=1--Backward} {\rm
 Take the word $\omega=x_1x_2\cdots x_{12}=1 1 2 3 2 1 3 2 1 1 2 2 \in\T^{(3)}_{12}$. Replacing the letters 1 by level steps, we obtain the path shown in Figure \ref{fig:Motzkin-word}(c). By (B2), from right to left we replace the pair $(x_a,x_b)=(x_{10},x_{12})$ (iteratively, $(x_{9},x_{11})$, $(x_6,x_8)$, $(x_2,x_5)$ and $(x_1,x_3)$) by $(U^{(1)},D^{(1)})$. The resulting path is shown in Figure \ref{fig:Motzkin-word}(b). By (B3), from right to left we replace the pair $(x_a,x_b)=(x_5,x_7)$ (iteratively,  $(x_3,x_4)$) by $(L,D^{(1)})$. Then we obtain the path $\phi_1^{-1}(\omega)\in\M^{(1)}_{12}$ shown in Figure \ref{fig:Motzkin-word}(a).
 }
\end{exa}

We have established the bijection $\phi_1$ of Theorem \ref{thm:main-bijection}.

\section{Critical up steps and exceeding up steps}
In this section, we describe briefly our strategy for the construction of the map $\phi_d$, using the same approach as in algorithm A.

\subsection{Critical up steps}
For a $d$-Motzkin path $\pi=x_1x_2\cdots x_n$ and $i\le n$, let $\mu(i)$ denote the prefix $x_1\cdots x_i$ of $\pi$. For $2\le k\le d$, an up step $x_j=U^{(k)}$ of $\pi$ is called \emph{critical} if for the prefix $\mu(j)$ of $\pi$,
 \[
 |\mu(j)|_{U^{(k)}}-|\mu(j)|_{D^{(k)}} =|\mu(j)|_{U^{(k-1)}}-|\mu(j)|_{D^{(k-1)}}.
 \]
The critical up steps are important for the construction of $\phi_d$.

Given a path $\pi=x_1x_2\cdots x_n\in\overline{\M}^{(d)}_n$ (respectively, $\pi\in\widehat{\M}^{(d)}_n$),
we start from the first level step enclosed in a matching $(U^{(d)},D^{(d)})$-pair, say $x_a$, (respectively, start from the first $U^{(d)}$) and determine a down-step sequence $(x_{b_d}, x_{b_{d-1}},\dots,x_{b_1})=(D^{(d)},D^{(d-1)},\dots,D^{(1)})$ from left to right such that $x_{b_d}$ is the first $D^{(d)}$ after $x_a$ and $x_{b_{d-1}},\dots,x_{b_1}$ are located by certain greedy procedures (described below). Then we replace $x_a$ by $D^{(d)}$ (respectively, by $L$) and replace the sequence $(x_{b_d}, \dots, x_{b_1})$ by $(D^{(d-1)},\dots,D^{(1)},2d+1)$ (respectively, $(D^{(d-1)},\dots,D^{(1)},2d)$).

To determine $x_{b_{d-1}}$, we search for the nearest $D^{(d-1)}$ after $x_{b_{d}}$.
However, when the pair $(x_a,x_{b_d})=(L,D^{(d)})$ of $\pi$ is replaced by $(D^{(d)},D^{(d-1)})$, the critical $U^{(d)}$ steps $x_j$ with $j>b_d$ would become `illegal' since $|\mu(j)|_{U^{(d)}}-|\mu(j)|_{D^{(d)}} =|\mu(j)|_{U^{(d-1)}}-|\mu(j)|_{D^{(d-1)}}+1$. (This also happens in the situation when $(x_a,x_{b_d})=(U^{(d)},D^{(d)})$ is replaced by $(L,D^{(d-1)})$ in the case $\pi\in\widehat{\M}^{(d)}_n$.) So if we encounter a critical $U^{(d)}$ on the way searching for the nearest $D^{(d-1)}$ after $x_{b_d}$, the critical $U^{(d)}$ and the first $D^{(d)}$ afterwards would be degraded to $(U^{(d-1)},D^{(d-1)})$, respecting the conditions (M1) and (M2) for $d$-Motzkin words. This process goes on until we locate a $D^{(d-1)}$ step, which is our $x_{b_{d-1}}$.
If $d=2$ then the requested down-step sequence associated to $x_a$ is determined; otherwise we move on to determine $x_{b_{d-2}}$, which gets more intricate.

To determine $x_{b_{d-2}}$, we search for the nearest $D^{(d-2)}$ after $x_{b_{d-1}}$. However, when $x_{b_{d-1}}$ is degraded to $D^{(d-2)}$, the critical $U^{(d-1)}$ steps $x_j$ with $j>b_{d-1}$ would become `illegal'. Moreover, once a critical $U^{(d-1)}$ is degraded to $U^{(d-2)}$, the critical $U^{(d)}$ steps afterwards would become `illegal'. We turn the process of determining $x_{b_k}$ into a loop that searches for the nearest $D^{(k)}$ after $x_{b_{k+1}}$. If we encounter a critical $U^{(k+1)}$ on the way, we mark this $U^{(k+1)}$ and go in a loop that searches for the nearest $D^{(k+1)}$ afterwards.

For example, for the 3-Motzkin path $\pi=x_1x_2\cdots x_{25}$ shown in Figure \ref{fig:critical-crucial}(a), the steps $x_4, x_{10}, x_{13}, x_{20}$ are the critical $U^{(2)}$ and  the steps $x_{9}, x_{15}$ are the critical $U^{(3)}$.
We start from $x_a=x_6$ and determine a down-step sequence $(x_{b_3}, x_{b_{2}},x_{b_1})$, which will be $(x_{8},x_{12},x_{23})$ in our approach.

The step $x_{b_3}=x_8$ is the first  $D^{(d)}$ after $x_a$. To determine $x_{b_2}$, we search for the nearest $D^{(2)}$ after $x_{b_3}$ and encounter a critical $U^{(3)}$ step, $x_9$, on the way. We mark $x_9$ and the first $D^{(3)}$ step, $x_{11}$, afterwards, then we get to a $D^{(2)}$ step, $x_{12}$, which is $x_{b_2}$. By the end of this iteration, the pair $(x_{9},x_{11})$ will be degraded to $(U^{(2)},D^{(2)})$.

To determine $x_{b_1}$, we search for the nearest $D^{(1)}$ after $x_{b_2}$ and encounter a critical $U^{(2)}$ step, $x_{13}$. Then mark this $U^{(2)}$ and search for the nearest $D^{(2)}$ as above. On the way, we encounter a critical $U^{(3)}$ step, $x_{15}$. The pairs $(x_{15},x_{17})$  and $(x_{13},x_{19})$ will be degraded to $(U^{(2)},D^{(2)})$ and $(U^{(1)},D^{(1)})$, respectively. Then before getting to the step $x_{23}=D^{(1)}$, which is $x_{b_1}$, we encounter another critical $U^{(2)}$ step, $x_{20}$, and the pair $(x_{20},x_{21})$ will be degraded to $(U^{(1)},D^{(1)})$.  The updated path is shown in Figure \ref{fig:critical-crucial}(b).

\begin{figure}[ht]
\begin{center}
\includegraphics[width=4.75in]{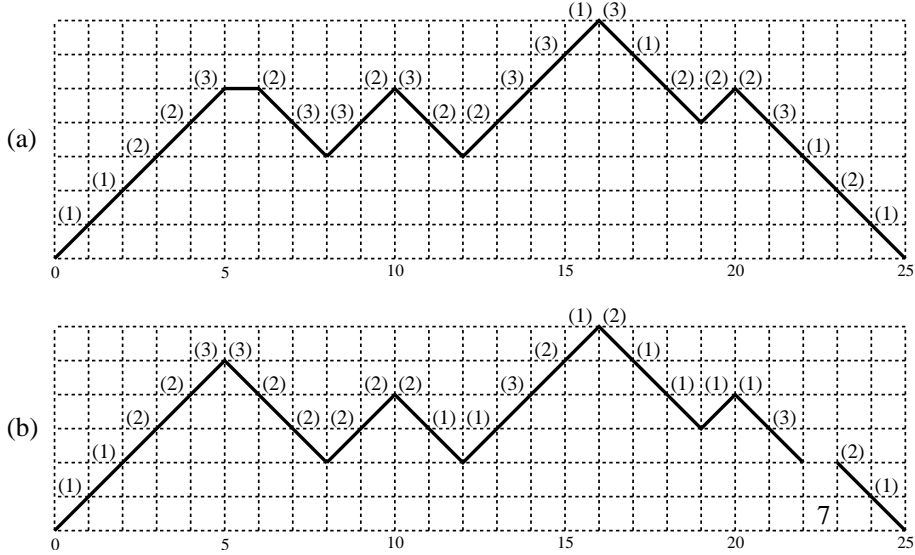}
\end{center}
\caption{\small A 3-Motzkin path in $\overline{\M}^{(3)}_{25}$ and a 3-Motzkin path in  $\widehat{\M}^{(3)}_{24}$.} \label{fig:critical-crucial}
\end{figure}



\subsection{Exceeding up steps}
For a $d$-Motzkin path $\pi=x_1x_2\cdots x_n$ and for $1\le k\le d-1$, an up step $x_j=U^{(k)}$ of $\pi$ is called \emph{exceeding} if for the prefix $\mu(j)$ of $\pi$,
\[
|\mu(j)|_{U^{(k)}}-|\mu(j)|_{D^{(k)}} =|\mu(j)|_{U^{(k+1)}}-|\mu(j)|_{D^{(k+1)}}+1.
\]
For example, for the path $\pi'=x_1x_2\cdots x_{25}$ (where $x_{23}=7$) shown in Figure \ref{fig:critical-crucial}(b), the steps $x_1,x_{13}, x_{16}, x_{20}$ are the exceeding $U^{(1)}$ and the steps $x_3, x_9, x_{15}$ are the exceeding $U^{(2)}$.
As one has noticed, the updated critical $U^{(k)}$ steps of the path $\pi$ in Figure \ref{fig:critical-crucial}(a) become exceeding $U^{(k-1)}$ steps of the path $\pi'$ in Figure \ref{fig:critical-crucial}(b).

The exceeding up steps are crucial for the construction of $\phi_d^{-1}$. The construction is devised as the reverse operation of the construction of $\phi_d$. Let us see how the exceeding up steps are used to recover the path $\pi$ in Figure \ref{fig:critical-crucial}(a) from the path $\pi'$ in Figure \ref{fig:critical-crucial}(b).

Starting from the letter $x_{23}=7$ and the nearest $D^{(1)}$ step, $x_{21}$, to the left, we traverse from right to left and search the nearest $D^{(2)}$. If we encounter an exceeding $U^{(1)}$ step on the way, we mark this $U^{(1)}$ and the nearest $D^{(1)}$ to the left. This process goes on until a $D^{(2)}$ is found. In Figure \ref{fig:critical-crucial}(b), we mark the pair $(x_{19},x_{20})$ and get to $x_{17}=D^{(2)}$.

Then search for the nearest $D^{(3)}$ to the left. If we encounter an exceeding $U^{(2)}$ step on the way, we mark this $U^{(2)}$ and go in a loop that search for the nearest $D^{(2)}$ as above. In Figure \ref{fig:critical-crucial}(b), to the left of $x_{17}$ we encounter an exceeding $U^{(2)}$ step, $x_{15}$. Then mark this $U^{(2)}$ and search for the nearest $D^{(2)}$ to the left. On the way, we encounter an exceeding $U^{(1)}$ step, $x_{13}$. We mark the pair $(x_{12},x_{13})$ and then locate a $D^{(2)}$ step, $x_{11}$. Before we get to the $D^{(3)}$ step $x_a=x_6$, we encounter another exceeding $U^{(2)}$ step, $x_{9}$. This time we locate a $D^{(2)}$ step, $x_8$. Hence all of the updated steps of $\pi$ are retrieved.

\section{The bijection $\phi_2$}
In this section, we give precise constructions of $\phi_2$ and $\phi_2^{-1}$.

\subsection{The map $\phi_2$} Let $\pi=x_1x_2\cdots x_n\in\M^{(2)}_n$. In the following algorithm, (C1) and (C2) are devised as an iteration for the construction of $\phi_2$.
  At the beginning of each iteration, the critical up steps of $\pi$ are refreshed. We start from the first level step, say $x_a$, enclosed in a matching $(U^{(2)},D^{(2)})$-pair (respectively, the first $U^{(2)}$) and traverse from left to right to determine down steps $(x_{b_2},x_{b_1})=(D^{(2)},D^{(1)})$.
  Meanwhile, in the subword between $x_{b_2}$ and $x_{b_1}$, we mark the critical $U^{(2)}$ steps and their nearest $D^{(2)}$ steps in a greedy way.

Likewise, over iterations $\pi$ would become a sequence of steps $\{L,U^{(1)},U^{(2)},D^{(1)},D^{(2)}\}$ and letters $\{1,2,\dots,5\}$. By the path $\pi$ we mean the path-part of $\pi$ whose length is the number of its steps.

\noindent
{\bf Algorithm C.}
 \begin{enumerate}
 \item[(C1)] $\pi\in\overline{\M}^{(2)}_n$. Then find the first level step enclosed in a matching $(U^{(2)},D^{(2)})$-pair, say $x_a$, and find the first $D^{(2)}$ after $x_a$, say $x_{b_2}$. Searching for the nearest $D^{(1)}$ after $x_{b_2}$, if we encounter a critical $U^{(2)}$ before a $D^{(1)}$ then we mark this $U^{(2)}$ and the first $D^{(2)}$ afterwards, say
 $x_{q}$. Searching for the nearest $D^{(1)}$ after $x_{q}$, if we encounter a critical $U^{(2)}$ before a $D^{(1)}$ then we also mark this $U^{(2)}$ and the first $D^{(2)}$ afterwards. Repeat this process until a $D^{(1)}$ is found, say $x_{b_1}$.
    Then form a new sequence $\pi'$ from $\pi$ as follows. Replace $(x_a,x_{b_2},x_{b_1})$ by $(D^{(2)},D^{(1)},5)$ and degrade
 every marked $(U^{(2)},D^{(2)})$-pair between $x_{b_2}$ and $x_{b_1}$ to $(U^{(1)},D^{(1)})$. If $\pi'$ contains a level step enclosed in a matching $(U^{(2)},D^{(2)})$-pair then go back to (C1) and proceed to process $\pi'$, otherwise go to (C2).
  \item[(C2)] $\pi\in\widehat{\M}^{(2)}_n$. Then find the first $U^{(2)}$, say $x_a$, and find the first $D^{(2)}$ after $x_a$, say $x_{b_2}$. Do the same procedure as in (C1) to search for the nearest $D^{(1)}$ after $x_{b_2}$. We end up with the located $D^{(1)}$, say $x_{b_1}$,
 and a number of marked $(U^{(2)},D^{(2)})$-pairs (possibly empty) between $x_{b_2}$ and $x_{b_1}$. Then we form a new sequence $\pi'$ from $\pi$ as follows. Replace $(x_a,x_{b_2},x_{b_1})$ by $(L,D^{(1)},4)$ and degrade every marked $(U^{(2)},D^{(2)})$-pair between $x_{b_2}$ and $x_{b_1}$ by $(U^{(1)}, D^{(1)})$. If $\pi'$ contains a $U^{(2)}$ step then go back to (C2) and proceed to process $\pi'$, otherwise go to (C3).
 \item[(C3)] $\pi\in\M^{(1)}_n$. Then the word $\phi_2(\pi)=\phi_{1}(\pi)$ can be obtained by algorithm A.
 \end{enumerate}

\begin{exa} \label{exa:d=2} {\rm
 Take the 2-Motzkin path $\pi=x_1x_2\cdots x_{15}$ shown in Figure \ref{fig:2-Motzkin-path}(a).
 We follow the notations used in algorithm C. By (C1), we locate the steps $(x_a,x_{b_2})=(x_7,x_8)$. Searching for the nearest $D^{(1)}$ after $x_{b_2}$, we encounter a critical $U^{(2)}$ step, $x_{11}$. So mark $x_{11}$ and the first $D^{(2)}$ step, $x_{12}$, afterwards. Then we locate a $D^{(1)}$ step, $x_{13}$, which is our $x_{b_1}$. Replace $(x_a,x_{b_2},x_{b_1})$ by $(D^{(2)},D^{(1)},5)$ and degrade the marked pair $(x_{11},x_{12})$ to  $(U^{(1)},D^{(1)})$. The resulting path $\pi'\in\widehat{\M}^{(2)}_{14}$ is shown in Figure \ref{fig:2-Motzkin-path}(b).

 By (C2), we locate the steps $(x_a,x_{b_2})=(x_3,x_4)$. Searching for the nearest $D^{(1)}$ after $x_{b_2}$, we encounter a critical $U^{(2)}$ step, $x_6$. So mark $x_6$ and the first $D^{(2)}$ step, $x_7$, afterwards. Then we locate a $D^{(1)}$ step, $x_8$, which is our $x_{b_1}$.
 Replace $(x_a,x_{b_2},x_{b_1})$ by  $(L,D^{(1)},4)$ and degrade the marked pair $(x_6,x_7)$ to  $(U^{(1)},D^{(1)})$. The resulting path $\pi''\in\widehat{\M}^{(2)}_{13}$ is shown in Figure \ref{fig:2-Motzkin-path}(c). By (C2) once again, we replace $(x_a,x_{b_2},x_{b_1})=(x_5,x_9,x_{15})$ by $(L,D^{(1)},4)$ and degrade the pair $(x_{10},x_{14})$ to $(U^{(1)},D^{(1)})$. The resulting Motzkin path is shown in Figure \ref{fig:2-Motzkin-path}(d). The letters of the remaining steps can be determined by (A1)-(A3), as shown in Example \ref{exa:d=1}. We obtain the corresponding word $\phi_2(\pi)=1 1 2 3 2 1 3 4 2 1 1 2 5 2 4\in\T^{(5)}_{15}$.
 }
\end{exa}

\begin{figure}[ht]
\begin{center}
\includegraphics[width=2.85in]{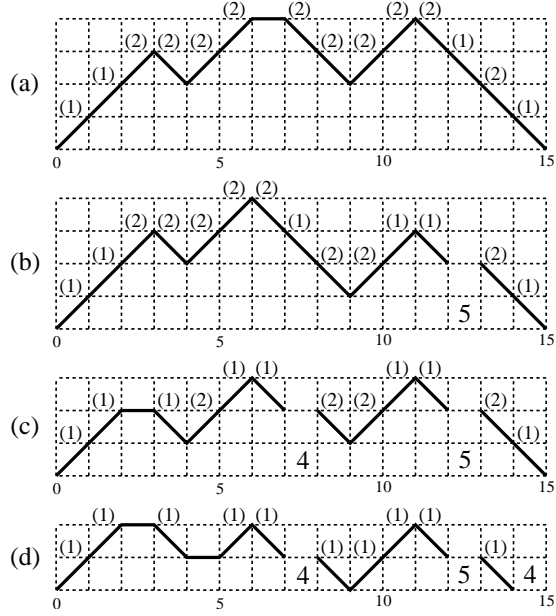}
\end{center}
\caption{\small A 2-Motzkin path $\pi$ and the corresponding word $\phi_2(\pi)$.} \label{fig:2-Motzkin-path}
\end{figure}

We shall prove that the word $\phi_2(\pi)$ constructed above is a Yamanouchi word in $\T^{(5)}_n$.

\begin{pro} \label{pro:one-iteration}
For a path $\pi\in\M^{(2)}_n$, we have $\phi_2(\pi)\in\T^{(5)}_n$. Specifically, $\phi_2(\pi)\in\widetilde{\T}^{(5)}_n$ if $\pi\in\overline{\M}^{(2)}_n$,  and $\phi_2(\pi)\in\widetilde{\T}^{(4)}_n$ if $\pi\in\widehat{\M}^{(2)}_n$.
\end{pro}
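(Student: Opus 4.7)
The plan is to induct on $n$. The base case $n = 0$ is trivial, and for $n \geq 1$ the case $\pi \in \M^{(1)}_n$ reduces to the correctness of $\phi_1$ established in Section 2 (using $\T^{(3)}_n \subseteq \T^{(5)}_n$). Otherwise $\pi \in \overline{\M}^{(2)}_n \cup \widehat{\M}^{(2)}_n$, and the first iteration of (C1) or (C2) produces a mixed sequence whose path-part $\pi'$ is a 2-Motzkin word of length $n - 1$ together with a new letter $\ell \in \{4, 5\}$ inserted at position $b_1$. Subsequent iterations of the algorithm ignore this inserted letter and operate on $\pi'$, so $\phi_2(\pi)$ is obtained from $\phi_2(\pi')$ by inserting $\ell$ at the corresponding position.

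The first task is to verify that $\pi'$ is valid and lies in the subclass expected by the algorithm's next invocation. Condition (M1) is a direct accounting of the substitutions. For (M2), the key observation is that the raw replacement $(x_a, x_{b_2}, x_{b_1}) \mapsto (D^{(2)}, D^{(1)}, \ell)$ alone would break the inequality $|\mu|_{U^{(1)}} - |\mu|_{D^{(1)}} \geq |\mu|_{U^{(2)}} - |\mu|_{D^{(2)}}$ at every prefix immediately after a critical $U^{(2)}$ lying strictly between $x_{b_2}$ and $x_{b_1}$, since at such a position the left-hand side drops by one while the right-hand side is unchanged. Degrading each such critical $U^{(2)}$ together with its first subsequent $D^{(2)}$ to a $(U^{(1)}, D^{(1)})$-pair restores the inequality exactly. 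A parallel check shows $|\mu|_{U^{(2)}} - |\mu|_{D^{(2)}} \geq 0$ survives on the interval $[a, b_2)$, using that $x_a$ is enclosed in some $(U^{(2)}, D^{(2)})$-pair, and counting enclosed level steps (for (C1)) or $(U^{(2)}, D^{(2)})$-pairs (for (C2)) shows $\pi'$ lies in the correct subclass.

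By the inductive hypothesis $\phi_2(\pi')$ is a Yamanouchi word of the declared type, so the remaining task is to show that inserting $\ell$ at position $b_1$ preserves the prefix inequalities $|\nu|_k \geq |\nu|_{k+1}$. The only new inequality to check is $|\nu|_{\ell - 1} \geq |\nu|_\ell$, which reduces to showing that in the prefix of $\phi_2(\pi)$ ending at $b_1$ the number of $(\ell - 1)$'s exceeds the number of $\ell$'s by at least one. I would establish this by matching each prior letter $\ell - 1$ in $\phi_2(\pi')$ with a specific matching pair of the 2-Motzkin path processed in an earlier iteration, and observing that the greedy choice of $b_1$ as the nearest $D^{(1)}$ after $x_{b_2}$ (following the trail of degradations) leaves at least one such $(\ell - 1)$ without a corresponding $\ell$ in the prefix. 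This bookkeeping across interleaved iterations, keeping track of which original positions host which final letters, is the main technical obstacle.

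Finally, the type statement follows from counting: $\pi \in \overline{\M}^{(2)}_n$ entails at least one (C1) iteration and hence at least one letter 5, so the fifth row of the associated SYT is non-empty; $\pi \in \widehat{\M}^{(2)}_n$ entails no (C1) iterations but at least one (C2) iteration, so no letter 5 appears but at least one letter 4 does, populating exactly the first four rows.
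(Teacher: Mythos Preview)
Your plan mirrors the paper's: the heart of the argument is the claim that one pass of (C1) or (C2) sends $\pi$ to a genuine $2$-Motzkin path $\pi'\in\M^{(2)}_{n-1}$, after which one iterates. Two points separate your sketch from a complete proof.

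First, your (M2) verification is imprecise. It is not true that the raw replacement breaks the inequality only ``at every prefix immediately after a critical $U^{(2)}$''; it breaks it at every prefix in $[b_2,b_1)$ where the original $\pi$ had equality, and such a prefix need not end at a $U^{(2)}$. The paper instead supposes $\pi'$ violates (M2) at a minimal position $j$, observes that then the step $x_j$ of $\pi'$ must be a $U^{(2)}$ or a $D^{(1)}$, and rules out each possibility by a three-case analysis according to whether $j$ lies between consecutive marked pairs, inside a marked pair, or beyond $b_1$. Your ``restores the inequality exactly'' essentially covers only the first of these cases.

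Second, and more seriously, the Yamanouchi step is left open. You correctly reduce it to a strict inequality $|\nu'|_{\ell-1}>|\nu'|_{\ell}$ in $\phi_2(\pi')$ at the relevant prefixes, note that the inductive hypothesis gives only $\ge$, and then explicitly flag the missing argument as ``the main technical obstacle.'' The paper does not use induction here. Instead it exhibits, for each letter $5$ produced by a (C1) iteration with data $(x_a,x_{b_2},x_{b_1})$, a specific letter $4$ strictly to its left: the level step $x_a$ has become a $D^{(2)}$ in $\pi'$, and in some later (C2) iteration this very $D^{(2)}$ plays the role of that iteration's $x_{b_2}$; since there are no critical $U^{(2)}$ between $x_a$ and the original $x_{b_2}$, the letter $4$ produced there lands at a position $k$ with $a<k\le b_2<b_1$. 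Distinct (C1) iterations give distinct positions $a$, hence distinct (C2) iterations and distinct $4$'s, so this is an injection from $5$'s to earlier $4$'s, yielding $|\nu|_4\ge|\nu|_5$ for every prefix at once. Your inductive framing obscures this pairing because the $4$ matched to the first-produced $5$ only materializes several iterations later, after you have already appealed to the hypothesis on $\pi'$.
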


 \begin{proof}
 Let $\pi=x_1\cdots x_n\in\overline{\M}^{(2)}_n$, and let $\pi'$ be the sequence obtained from $\pi$  by (C1) in one iteration. First, we shall prove the following property.

 CLAIM:  $\pi'$ is a 2-Motzkin path, i.e., $\pi'\in\M^{(2)}_{n-1}$.

For the path $\pi$, let $(x_a,x_{b_2},x_{b_1})$ be the triplet $(L,D^{(2)},D^{(1)})$ located in (C1), and let $(x_{p_1},x_{q_1}),\dots,(x_{p_m},x_{q_m})$ be the marked $(U^{(2)},D^{(2)})$-pairs from left to right between $x_{b_2}$ and $x_{b_1}$,  for some $m\ge 0$. Note that $x_{p_i}$ is the first critical $U^{(2)}$ after $x_{q_{i-1}}$, for $1\le i\le m$. (We assume $(p_0,q_0)=(a,b_2)$.)

 The path $\pi'$ is obtained from $\pi$ by replacing $(x_a,x_{b_2},x_{b_1})$ by $(D^{(2)},D^{(1)},5)$ and degrading each $(x_{p_i},x_{q_i})$ to $(U^{(1)},D^{(1)})$, $1\le i\le m$. We shall verify that the path $\pi'$ satisfies the conditions (M1) and (M2) for 2-Motzkin paths.

 It is easy to see that $\pi'$  satisfies (M1). Suppose that $\pi'$ is against (M2).
 Let $j$ be the least integer such that for the prefix $\mu'=x_1\cdots x_j$ of $\pi'$,
\begin{equation} \label{eqn:against}
 |\mu'|_{U^{(2)}}-|\mu'|_{D^{(2)}}=|\mu'|_{U^{(1)}}-|\mu'|_{D^{(1)}}+1.
\end{equation}
 Then $x_j$ is either a $U^{(2)}$ or a $D^{(1)}$. Let $\mu$ be the prefix of $\pi$ with $|\mu|=|\mu'|$. Consider the following three cases.

 Case I. $q_{i-1} <j< p_i$ for some $i$. If $x_j$ is a $D^{(1)}$ then $x_j$ would be the $D^{(1)}$ we located, a contradiction. Otherwise, $x_j$ is a
 $U^{(2)}$.  Comparing $\mu$ and $\mu'$ (see Figure \ref{fig:proof-4-2}), we observe that
 $|\mu|_{U^{(2)}}-|\mu|_{D^{(2)}}=|\mu'|_{U^{(2)}}-|\mu'|_{D^{(2)}}$ and $|\mu|_{U^{(1)}}-|\mu|_{D^{(1)}}=|\mu'|_{U^{(1)}}-|\mu'|_{D^{(1)}}+1$. From Eq.\,(\ref{eqn:against}), it follows that
 $|\mu|_{U^{(2)}}-|\mu|_{D^{(2)}}=|\mu|_{U^{(1)}}-|\mu|_{D^{(1)}}$, and hence $x_j$ is critical. This contradicts that $x_{p_i}$ is the first critical $U^{(2)}$  after $x_{q_{i-1}}$.

 Case II. $p_i <j< q_i$ for some $i$. Comparing $\mu$ and $\mu'$, we obtain that
 $|\mu|_{U^{(2)}}-|\mu|_{D^{(2)}}=|\mu|_{U^{(1)}}-|\mu|_{D^{(1)}}+2$. This contradicts that $\pi$ is a 2-Motzkin path.

 Case III. $b_1<j$. Comparing $\mu$ and $\mu'$, we obtain that
 $|\mu|_{U^{(2)}}-|\mu|_{D^{(2)}}=|\mu|_{U^{(1)}}-|\mu|_{D^{(1)}}+1$. This also contradicts that $\pi$ is a 2-Motzkin path. The claim is proved.

 Over iterations, we construct the word $\phi_2(\pi)$.
 At a certain iteration, the step $x_a=D^{(2)}$ of $\pi'$  would be applied in (C2), getting a $D^{(1)}$ step, say $x_k$ with $a<k\le b_2$, replaced by a letter 4 (note that there are no critical $U^{(2)}$ steps between $x_a$ and $x_{b_2}$). Moreover, whenever a step gets replaced by a letter 5 there is always an accompanying pair $(x_{a'},x_{b'_2})$ getting a $D^{(1)}$ step $x_{k'}$ with $a'<k'\le b'_2$ replaced by a letter 4. Hence $\phi_2(\pi)\in\widetilde{\T}^{(5)}_n$.

 By a similar argument, one can prove the other case $\pi\in\widehat{\M}^{(2)}_n$. The assertion follows.
\end{proof}

\begin{figure}[ht]
\begin{center}
\psfrag{A}[][][1]{$\pi:$}
\psfrag{B}[][][1]{$\pi':$}
\psfrag{a}[][][1]{$a$}
\psfrag{b2}[][][1]{$b_2$}
\psfrag{b1}[][][1]{$b_1$}
\psfrag{pa}[][][1]{$p_{i-1}$}
\psfrag{j}[][][1]{$j$}
\psfrag{qa}[][][1]{$q_{i-1}$}
\psfrag{pi}[][][1]{$p_{i}$}
\psfrag{qi}[][][1]{$q_{i}$}
\includegraphics[width=4in]{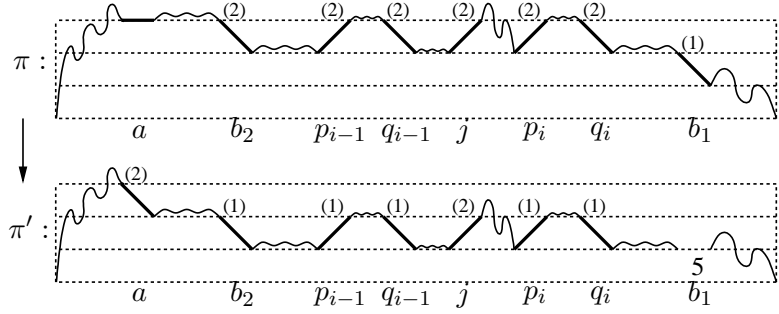}
\end{center}
\caption{\small Illustration for Case I in the proof of Proposition \ref{pro:one-iteration}.} \label{fig:proof-4-2}
\end{figure}

For a path $\pi\in\M^{(2)}_n$ with more than one step replaced by a letter 5 (respectively, 4), the following result shows that these steps got replaced in the order from left to right.

\begin{pro} \label{pro:Left->Right} For $j\ge 2$ and a word $\phi_2(\pi)$ with $j$ letters of 5 (respectively, 4), let $x_{i_1},\dots,x_{i_j}$ be the steps of $\pi$ replaced by a letter 5 (respectively, 4) in iterative order. Then $i_1<\cdots<i_j$.
\end{pro}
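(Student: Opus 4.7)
The plan is to establish the claim for the letters 5 produced by iterations of (C1); the parallel statement for letters 4 in (C2) follows by the same argument with $(L, D^{(1)}, 4)$ in place of $(D^{(2)}, D^{(1)}, 5)$ and with $x_{a_t}$ read as the first $U^{(2)}$ of $\pi^{(t)}$ rather than as the first enclosed level step. For the $t$th iteration of (C1), let $(x_{a_t}, x_{b_2^{(t)}}, x_{b_1^{(t)}})$ denote the located triplet and $(x_{p_i^{(t)}}, x_{q_i^{(t)}})$ the marked pairs, all indexed by their original positions in $\pi$, and write $\pi^{(t)}$ for the sequence at the start of iteration $t$. The task reduces to proving $b_1^{(t)} < b_1^{(t+1)}$ for every $t$.

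As a warm-up, I would verify $a_{t+1} > a_t$. Every level step at a position $c < a_t$ in $\pi^{(t)}$ is unaffected by the replacements of iteration $t$ and, by the minimality of $a_t$, was not enclosed in any $(U^{(2)},D^{(2)})$-pair of $\pi^{(t)}$; hence its prefix statistics do not change and it remains unenclosed in $\pi^{(t+1)}$. Combined with the fact that $x_{a_t}$ itself has become a $D^{(2)}$ step, this yields $a_{t+1} > a_t$.

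To compare $b_1^{(t)}$ with $b_1^{(t+1)}$ I split into two cases. If $b_2^{(t+1)} > b_1^{(t)}$, then trivially $b_1^{(t+1)} \ge b_2^{(t+1)} > b_1^{(t)}$. Otherwise $b_2^{(t+1)} < b_1^{(t)}$, where equality is impossible because $x_{b_1^{(t)}}$ has been replaced by the letter 5 and is no longer a step of $\pi^{(t+1)}$. The main claim in this case is that every $D^{(1)}$ of $\pi^{(t+1)}$ at a position $c$ with $b_2^{(t+1)} < c < b_1^{(t)}$ lies strictly inside a marked $(U^{(2)},D^{(2)})$-pair of the iteration $t+1$ search, so that search cannot terminate at any such $c$ and is forced to skip past $b_1^{(t)}$, yielding $b_1^{(t+1)} > b_1^{(t)}$. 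Any such $D^{(1)}$ is either a step already present as $D^{(1)}$ in $\pi^{(t)}$ inside some old marked pair $(p_i^{(t)}, q_i^{(t)})$, or the degraded right endpoint $q_i^{(t)}$ itself. Using the prefix-count bookkeeping developed for Cases I, II, III in the proof of Proposition \ref{pro:one-iteration}, I would pair each such $c$ with a $U^{(2)}$ of $\pi^{(t+1)}$ lying to the left of $c$ that is newly critical in $\pi^{(t+1)}$ and whose first matching $D^{(2)}$ in $\pi^{(t+1)}$ lies to the right of $c$; this critical up-step forces iteration $t+1$ to enclose $c$ in a freshly marked pair.

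The principal technical obstacle is precisely this masking step. The critical-$U^{(2)}$ set of $\pi^{(t+1)}$ genuinely differs from that of $\pi^{(t)}$ because the degradations $(p_i^{(t)}, q_i^{(t)}) \to (U^{(1)}, D^{(1)})$ shift the prefix difference $(|\mu|_{U^{(2)}}-|\mu|_{D^{(2)}}) - (|\mu|_{U^{(1)}}-|\mu|_{D^{(1)}})$ by explicit amounts tracked in Cases I--III of the proof of Proposition \ref{pro:one-iteration}. I expect those same identities to furnish an exact matching between the threatening $D^{(1)}$'s in the dangerous range and new critical $U^{(2)}$'s of $\pi^{(t+1)}$, completing the argument; once this matching is in hand, the letter 4 case is obtained by transcribing the argument for (C2) verbatim.
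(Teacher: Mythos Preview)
Your overall reduction---show $b_1^{(t)} < b_1^{(t+1)}$ for consecutive iterations---is exactly the right target, and the warm-up $a_t < a_{t+1}$ is fine. The gap is in the heart of the argument: the ``masking'' claim that every $D^{(1)}$ of $\pi^{(t+1)}$ in the interval $(b_2^{(t+1)}, b_1^{(t)})$ is enclosed in a marked pair of iteration $t+1$. You yourself flag this as the principal obstacle and say you \emph{expect} the prefix-count identities to furnish a matching; that is a hope, not a proof. Worse, the mechanism you propose is not sufficient even if the matching exists. Producing, for a given $D^{(1)}$ at position $c$, a critical $U^{(2)}$ of $\pi^{(t+1)}$ somewhere to the left of $c$ whose matching $D^{(2)}$ lies to the right of $c$ does \emph{not} force the greedy search of (C1) to enclose $c$: the search marks the \emph{first} critical $U^{(2)}$ it meets and then the \emph{first} $D^{(2)}$ after it, and that $D^{(2)}$ may well land before $c$, after which the search could hit $c$ directly. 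So the pairing you sketch does not control what the algorithm actually does.

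The paper avoids this difficulty by proving something weaker and arguing by contradiction. First it records the easy fact $b_2^{(t)} < b_2^{(t+1)}$, and then reduces the goal to showing $b_1^{(t+1)} > q_m^{(t)}$, the position of the last marked $D^{(2)}$ of iteration $t$ (this suffices because there are no $D^{(1)}$'s of $\pi^{(t+1)}$ strictly between $q_m^{(t)}$ and $b_1^{(t)}$). Assuming $b_1^{(t+1)} \le q_m^{(t)}$, one looks at the \emph{last} marked pair $(s_r,t_r)$ of iteration $t+1$ and locates $t_r$ in a gap $(q_{k-1}^{(t)}, p_k^{(t)})$ between old markers. A short prefix-count computation then exhibits a further critical $U^{(2)}$ of $\pi^{(t+1)}$ strictly between $t_r$ and $p_k^{(t)}$, contradicting the choice of $(s_r,t_r)$ as the last marked pair and forcing $b_1^{(t+1)}$ to be a $D^{(1)}$ that the \emph{previous} iteration would already have captured. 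This contradiction argument is what replaces your unproved masking step; if you want to push your direct approach through, you will need a comparably precise analysis of how the greedy search of iteration $t+1$ threads through the old markers $p_i^{(t)}, q_i^{(t)}$, not merely the existence of critical up steps.
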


\begin{proof} With abuse of notation, let $\pi$ and $\pi'$ be the paths in two consecutive iterations of (C1). For $\pi$, let $(x_a,x_{b_2},x_{b_1})$ be the triplet $(L,D^{(2)},D^{(1)})$ located in (C1), and let $(x_{p_1},x_{q_1}),\dots,(x_{p_m},x_{q_m})$ be the marked $(U^{(2)},D^{(2)})$-pairs between $x_{b_2}$ and $x_{b_1}$,  for some $m\ge 0$. We assume $(p_0,q_0)=(a,b_2)$.
The path $\pi'$ is obtained from $\pi$ by replacing $(x_a,x_{b_2},x_{b_1})$ by $(D^{(2)},D^{(1)},5)$ and degrading each $(x_{p_i},x_{q_i})$ to $(U^{(1)},D^{(1)})$, $1\le i\le m$.

For $\pi'$, let $(x_{a'},x_{b'_2},x_{b'_1})$ be the triplet $(L,D^{(2)},D^{(1)})$ located in (C1), and let  $(x_{s_1},x_{t_1}),\dots$, $(x_{s_{r}},x_{t_{r}})$ be the marked $(U^{(2)},D^{(2)})$-pairs between $x_{b'_2}$ and $x_{b'_1}$, for some $r\ge 0$. We shall prove that $b_1<b'_1$.

Since $x_{b_1}$ is the nearest $D^{(1)}$ after $x_{q_m}$ in the former iteration, it suffices to prove the following property.

CLAIM:  $x_{b'_1}$ is after $x_{q_m}$, i.e., $q_m<b'_1$.

Note that $b_2<b'_2$ (since they are determined by the same
operation as in (A1) of the map $\phi_1$), so if $m=0$ or
$q_m<b'_2$ ($m\ge 1$) then the claim is proved. Let $b'_2<q_m$.
Since $x_{q_i}$ is the first $D^{(2)}$ after $x_{p_i}$ in $\pi$
($1\le i\le m$), we have $q_{h-1}<b'_2<p_h$ for some $h\ge 1$.
Then there exists a critical $U^{(2)}$ between $x_{b'_2}$ and
$x_{p_h}$ in $\pi'$. (Note that there are no $D^{(1)}$ steps
between $x_{q_{h-1}}$ and $x_{p_{h}}$ in $\pi$, or otherwise we
would locate a $D^{(1)}$ before $x_{p_h}$ in the former
iteration.)

Suppose to the contrary that $b'_1<q_m$. Then for the last marked pair $(x_{s_r},x_{t_r})$ of $\pi'$, we have $q_{k-1}<t_r<p_{k}$ for some $k\le m$.
Consider the relative order of $x_{p_{k-1}}, x_{q_{k-1}}, x_{s_r}$ and $x_{t_r}$. Since $x_{s_r}$ is a $U^{(2)}$ in $\pi'$, it did not get replaced in the former iteration. There are two cases. For some $j$ ($h\le j\le k$),

(i) $p_{j-1}<s_r<q_{j-1}$ and $q_{k-1}<t_r<p_{k}$.  By (M2), we
have $|\mu(s_r)|_{U^{(2)}}-|\mu(s_r)|_{D^{(2)}}\le
|\mu(s_r)|_{U^{(1)}}-|\mu(s_r)|_{D^{(1)}}$.

(ii) $q_{j-1}<s_r<p_{j}$ and $q_{k-1}<t_r<p_{k}$ (see Figure \ref{fig:proof-4-3}). Then $x_{s_r}$ is not critical in $\pi$ and we have $|\mu(s_r)|_{U^{(2)}}-|\mu(s_r)|_{D^{(2)}}\le |\mu(s_r)|_{U^{(1)}}-|\mu(s_r)|_{D^{(1)}}-1$.

In either case, it follows that  $|\mu(t_r)|_{U^{(2)}}-|\mu(t_r)|_{D^{(2)}}\le |\mu(t_r)|_{U^{(1)}}-|\mu(t_r)|_{D^{(1)}}-2$.
Since $x_{p_{k}}$ is a critical $U^{(2)}$ in $\pi$, $|\mu(p_k)|_{U^{(2)}}-|\mu(p_k)|_{D^{(2)}}= |\mu(p_k)|_{U^{(1)}}-|\mu(p_k)|_{D^{(1)}}$. Then there exists a $U^{(2)}$, say $x_{\ell}$, between $x_{t_r}$ and $x_{p_{k}}$ such that
\begin{equation} \label{eqn:ell}
|\mu(\ell)|_{U^{(2)}}-|\mu(\ell)|_{D^{(2)}}=|\mu(\ell)|_{U^{(1)}}-|\mu(\ell)|_{D^{(1)}}-1
\end{equation}
(since there are no $D^{(1)}$ steps between $x_{q_{k-1}}$ and $x_{p_{k}}$ in $\pi$).
Comparing $\mu(\ell)$ and $\mu'(\ell)$, we observe that
 $|\mu'(\ell)|_{U^{(2)}}-|\mu'(\ell)|_{D^{(2)}}=|\mu(\ell)|_{U^{(2)}}-|\mu(\ell)|_{D^{(2)}}$ and $|\mu'(\ell)|_{U^{(1)}}-|\mu'(\ell)|_{D^{(1)}}=|\mu(\ell)|_{U^{(1)}}-|\mu(\ell)|_{D^{(1)}}-1$.
From Eq.\,(\ref{eqn:ell}), it follows that
 $|\mu'(\ell)|_{U^{(2)}}-|\mu'(\ell)|_{D^{(2)}}= |\mu'(\ell)|_{U^{(1)}}-|\mu'(\ell)|_{D^{(1)}}$. Such a step $x_\ell$ is a critical $U^{(2)}$ in $\pi'$. It follows that $b'_1<\ell$ since $(x_{s_r},x_{t_r})$ is the last marked pair before $x_{b'_1}$ in $\pi'$. This implies that $x_{b'_1}$ is a $D^{(1)}$ between $x_{q_{k-1}}$ and $x_{p_{k}}$, and that we would get $x_{b'_1}$ replaced by a letter 5 in the former iteration, a contradiction.
The claim is proved.

This proves the assertion for the case $\phi_2(\pi)\in\widetilde{\T}^{(5)}_n$.
By a similar argument, one can prove the assertion for the other case $\phi_2(\pi)\in\widetilde{\T}^{(4)}_n$.
\end{proof}

\begin{figure}[ht]
\begin{center}
\psfrag{A}[][][1]{$\pi:$}
\psfrag{B}[][][1]{$\pi':$}
\psfrag{a}[][][1]{$a$}
\psfrag{b2}[][][1]{$b_2$}
\psfrag{b1}[][][1]{$b_1$}
\psfrag{pb}[][][1]{$p_{j-1}$}
\psfrag{ell}[][][1]{$\ell$}
\psfrag{qb}[][][1]{$q_{j-1}$}
\psfrag{pi}[][][1]{$p_{i}$}
\psfrag{qi}[][][1]{$q_{i}$}
\psfrag{pk}[][][1]{$p_{k}$}
\psfrag{qk}[][][1]{$q_{k}$}
\psfrag{sr}[][][1]{$s_{r}$}
\psfrag{tr}[][][1]{$t_{r}$}
\includegraphics[width=5.25in]{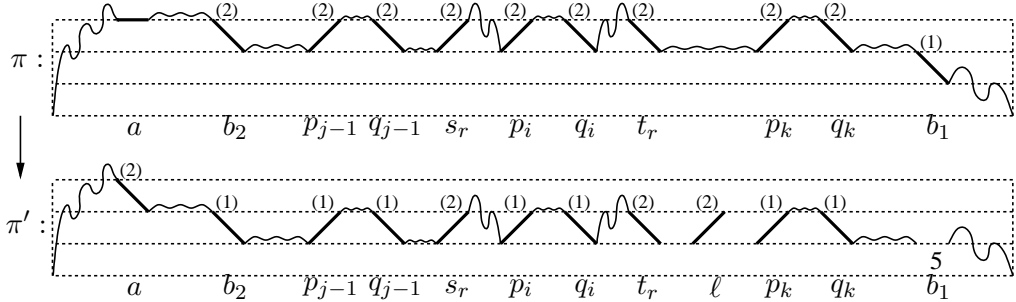}
\end{center}
\caption{\small Illustration for case (ii) in the proof of Proposition \ref{pro:Left->Right}, where $j\le i\le k-1$ (possibly empty).} \label{fig:proof-4-3}
\end{figure}

\subsection{The map $\phi_2^{-1}$}

Given a word $\omega=x_1x_2\cdots x_n\in\T_n^{(5)}$, the Motzkin path corresponding to the subword of $\omega$ consisting of the letters $\{1,2,3\}$ can be determined by the map $\phi_1^{-1}$, using algorithm B. For the letters 4 (respectively, 5) of $\omega$, we shall recover their steps in the order from right to left, using the reverse operation of algorithm C.

At the beginning of each iteration,  the exceeding up steps of $\omega$ are refreshed. We start from the rightmost letter 4 (respectively, 5) of $\omega$ and the first $D^{(1)}$ step to the left. Traverse $\omega$ from right to left to locate a level step above the $x$-axis (respectively, a $U^{(2)}$). On the way, we mark the exceeding $U^{(1)}$ steps and their nearest $D^{(1)}$ to the left in a greedy way, managing to recover the steps that have been updated in (C1) and (C2).

\noindent
{\bf Algorithm D.}

\begin{enumerate}
\item[(D1)] If $\omega$ contains no letters 4 then the path $\phi_2^{-1}(\omega)=\phi_{1}^{-1}(\omega)$ is obtained. Otherwise, go to (D2).
\item[(D2)] If $\omega$ contains a letter 4, then find the rightmost letter $4$, say $x_{b}$, and from right to left find the nearest $D^{(1)}$ to the left of $x_{b}$, say $x_q$. Searching for the nearest level step to the left of $x_q$, if we encounter an exceeding $U^{(1)}$ then we mark this $U^{(1)}$ and the nearest $D^{(1)}$ on its left, say $x_{q'}$. Searching for the nearest level step to the left of $x_{q'}$, if we encounter an exceeding $U^{(1)}$ then we also mark this $U^{(1)}$ and the nearest $D^{(1)}$ on its left. Repeat this process until a level step is found, say $x_{a}$.
    Then  form a new sequence $\omega'$ from $\omega$ as follows. Replace $(x_a,x_{b})$ by $(U^{(2)},D^{(1)})$ and upgrade every marked $D^{(1)}$ and $U^{(1)}$ between $x_a$ and $x_{b}$ to $D^{(2)}$ and $U^{(2)}$, respectively. If $\omega'$ contains no letter 4 then go to (D3); otherwise, go back to (D2) and proceed to process $\omega'$.
\item[(D3)] If $\omega$ contains a letter 5 then  find the rightmost letter $5$, say $x_{b}$, and from right to left find the nearest $D^{(1)}$ to the left of $x_{b}$, say $x_q$. Use the same procedure as in (D2) to search for the nearest $D^{(2)}$ to the left of $x_{q}$. We end up with the located $x_a=D^{(2)}$ and a number of marked $D^{(1)}$ and $U^{(1)}$ between $x_a$ and $x_{b}$. Then form a new sequence $\omega'$ from $\omega$ as follows. Replace $(x_a,x_{b})$ by  $(L,D^{(1)})$ and upgrade every marked $D^{(1)}$ and $U^{(1)}$ between $x_a$ and $x_{b}$ to $D^{(2)}$ and $U^{(2)}$, respectively. If $\omega'$ contains no letter 5 then we are done; otherwise, go back to (D3) and proceed to process $\omega'$.
\end{enumerate}

We remark that a $U^{(1)}$ step that rises from the $x$-axis to the line $y=1$ is always exceeding, so the level step $x_a$ located in (D2) is always above the $x$-axis.

\begin{exa} \label{exa:d=2--Backward} {\rm Take the word $\omega=1 1 2 3 2 1 3 4 2 1 1 2 5 2 4\in\T^{(5)}_{15}$. Using algorithm B, we obtain the Motzkin path corresponding to the subword of $\omega$ consisting of the letters $\{1,2,3\}$, as shown in Figure \ref{fig:2-Motzkin-path}(d). By (D2), we start from the rightmost letter 4, i.e., $x_{b}=x_{15}$, and find the nearest $D^{(1)}$ step, $x_{14}$, to the left. Searching for the nearest level step to the left of $x_{14}$, we encounter an exceeding $U^{(1)}$ step, $x_{10}$. So mark $x_{10}$ and the nearest $D^{(1)}$ to the left, i.e., $x_9$. Then we get to the level step $x_a=x_5$.  Replace $(x_a,x_{b})$ by $(U^{(2)},D^{(1)})$ and upgrade  $(x_9,x_{10},x_{14})$ to $(D^{(2)},U^{(2)},D^{(2)})$. The resulting path $\omega'$ is shown in Figure \ref{fig:2-Motzkin-path}(c). By (D2) once again, we obtain a new path from $\omega'$, which is shown in Figure \ref{fig:2-Motzkin-path}(b).

By (D3), we start from the rightmost letter 5, i.e., $x_{b}=x_{13}$, and find the nearest $D^{(1)}$ step, $x_{12}$, to the left. Searching for the nearest $D^{(2)}$ to the left of $x_{12}$, we encounter an exceeding $U^{(1)}$ step, $x_{11}$. So mark $x_{11}$ and the nearest $D^{(1)}$ to the left, i.e., $x_8$. Then we get to the $D^{(2)}$ step $x_a=x_7$.  Replace $(x_a,x_{b})$ by  $(L,D^{(1)})$ and upgrade $(x_8,x_{11},x_{12})$ to $(D^{(2)},U^{(2)},D^{(2)})$. The resulting path $\phi_2^{-1}(\omega)$ is shown in Figure \ref{fig:2-Motzkin-path}(a).
}
\end{exa}

The following proposition shows that the algorithm D is the reverse operation of the algorithm C.

\begin{pro} \label{pro:d=2--Backward} For a path $\pi\in\M^{(2)}_n$, let $\pi'$ be the path obtained from $\pi$  by (C1) (respectively, (C2)) in one iteration. Then $\pi$ can be recovered from $\pi'$  by (D3) (respectively, (D2)).
\end{pro}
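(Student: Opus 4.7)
The plan is to verify that applying (D3) once to $\pi'$ recovers $\pi$ in the (C1)-case; the (C2)-(D2) case is parallel, with the first $U^{(2)}$ step in place of the first level step enclosed in a matching $(U^{(2)},D^{(2)})$-pair and letter $4$ in place of letter $5$. Throughout I adopt the notation from the proof of Proposition~\ref{pro:Left->Right}: $(x_a, x_{b_2}, x_{b_1})$ is the triplet $(L, D^{(2)}, D^{(1)})$ located by (C1), $(x_{p_i}, x_{q_i})$ for $1\le i\le m$ are the marked $(U^{(2)},D^{(2)})$-pairs in left-to-right order, and I set $(p_0, q_0) := (a, b_2)$.

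First I would identify the starting position of (D3). By Proposition~\ref{pro:Left->Right} the letters $5$ produced by (C1) appear in left-to-right iteration order, so the rightmost letter $5$ in $\pi'$ is the one just written, at position $b_1$. Since $b_1$ is the first $D^{(1)}$ after $q_m$ in $\pi$ and (C1) introduces no new $D^{(1)}$ inside $(q_m, b_1)$, the nearest $D^{(1)}$ to the left of $x_{b_1}$ in $\pi'$ is $x_{q_m}$; hence (D3) begins with $x_b = x_{b_1}$ and $x_q = x_{q_m}$.

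The heart of the argument is a level-counting identity. A direct bookkeeping of the (C1) edits shows that for every position $j$ lying in one of the intervals $[a, b_2)$ or $[p_i, q_i)$ (for $1 \le i \le m$),
\[
|\mu'(j)|_{U^{(2)}}-|\mu'(j)|_{D^{(2)}} \;=\; \bigl(|\mu(j)|_{U^{(2)}}-|\mu(j)|_{D^{(2)}}\bigr)-1,
\]
\[
|\mu'(j)|_{U^{(1)}}-|\mu'(j)|_{D^{(1)}} \;=\; |\mu(j)|_{U^{(1)}}-|\mu(j)|_{D^{(1)}}.
\]
Specializing to $j=p_i$ turns the critical condition in $\pi$ into the exceeding condition in $\pi'$, so each $x_{p_i}$ is an exceeding $U^{(1)}$ step of $\pi'$. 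Conversely, if $x_j = U^{(1)}$ lies in the open interval $(p_i,q_i)$ or $(a,b_2)$, the exceeding condition in $\pi'$ translates back via the identity to $|\mu(j-1)|_{U^{(1)}}-|\mu(j-1)|_{D^{(1)}} = |\mu(j-1)|_{U^{(2)}}-|\mu(j-1)|_{D^{(2)}}-1$ in $\pi$, contradicting (M2). Combined with the facts that (i) $(p_i,q_i)$ and $(a,b_2)$ contain no $D^{(2)}$ in $\pi$ (whence none in $\pi'$) by the greedy choice of $q_i$ and $b_2$, and (ii) $(q_{i-1},p_i)$ contains no $D^{(1)}$ in $\pi$ (whence none in $\pi'$) by the greedy choice of $p_i$, these are all the obstructions (D3) needs.

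Finally I would trace (D3). Searching leftward from $x_{q_m}$ the procedure ignores $D^{(1)}$, $L$, $U^{(2)}$, non-exceeding $U^{(1)}$ and earlier letters $5$; by the absence of $D^{(2)}$ and of exceeding $U^{(1)}$ in $(p_m,q_m)$, the first obstruction encountered is the exceeding $U^{(1)}$ at $x_{p_m}$, whose nearest $D^{(1)}$ to the left in $\pi'$ is $x_{q_{m-1}}$. Iterating, the search marks $x_{p_m},\ldots,x_{p_1}$ and hops through $x_{q_{m-1}},\ldots,x_{q_0}=x_{b_2}$, finally terminating at $x_a$ (now a $D^{(2)}$ in $\pi'$). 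The (D3) prescription then rewrites $(x_a,x_{b_1})=(D^{(2)},5)$ as $(L,D^{(1)})$ and upgrades the marked steps $x_{b_2},x_{q_1},\ldots,x_{q_m}$ together with $x_{p_1},\ldots,x_{p_m}$ back to $D^{(2)}$ and $U^{(2)}$; these rewritings are precisely the inverse of the (C1) edits, so the outcome is $\pi$. The main obstacle is the level-counting identity and its consequence that no exceeding $U^{(1)}$ can hide inside a search interval; once that is in place, the remaining verification is routine bookkeeping.
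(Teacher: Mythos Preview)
Your proof is correct and takes essentially the same approach as the paper's: both hinge on the level-counting observation that an exceeding $U^{(1)}$ step $x_j$ in $\pi'$ lying strictly inside some $(p_i,q_i)$ would force $|\mu(j-1)|_{U^{(1)}}-|\mu(j-1)|_{D^{(1)}} < |\mu(j-1)|_{U^{(2)}}-|\mu(j-1)|_{D^{(2)}}$ in $\pi$, violating (M2), so that (D3) retraces precisely the steps edited by (C1). Your write-up is slightly more explicit about the full trace of (D3) and invokes Proposition~\ref{pro:Left->Right} up front to pin down the starting position $b_1$, which the paper defers to the remark immediately following the proposition.
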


\begin{proof} For a path $\pi=x_1\cdots x_n\in\overline{\M}^{(2)}_n$, let $(x_a,x_{b_2},x_{b_1})=(L,D^{(2)},D^{(1)})$ be the triplet of $\pi$ located in (C1), and let $(x_{p_1},x_{q_1}),\dots,(x_{p_m},x_{q_m})$ be the marked $(U^{(2)},D^{(2)})$-pairs between $x_{b_2}$ and $x_{b_1}$, for some $m\ge 0$, where $x_{p_i}$ is the first critical $U^{(2)}$ after $x_{q_{i-1}}$, for $1\le i\le m$. (We assume $(p_0,q_0)=(a,b_2)$.)
 The path $\pi'$ is obtained from $\pi$ by replacing $(x_a,x_{b_2},x_{b_1})$ by $(D^{(2)},D^{(1)},5)$ and degrading each $(x_{p_i},x_{q_i})$ to $(U^{(1)},D^{(1)})$, $1\le i\le m$.

 To recover $\pi$ from $\pi'$, we first restructure the subword from $x_{b_1}$ to $x_{b_2}$.
By (D3), we start from the letter $x_{b_1}$. Traversing $\pi'$ from right to left, we observe that $x_{q_m}$ is the nearest $D^{(1)}$ to the left of $x_{b_1}$ and that each $x_{q_{i-1}}$ is the nearest $D^{(1)}$ to the left of $x_{p_i}$ for $1\le i\le m$, or otherwise we would get to a $D^{(1)}$ before $x_{b_1}$ in $\pi$. It suffices to prove the following property of $\pi'$.

CLAIM: each $x_{p_i}$ is the nearest exceeding $U^{(1)}$ to the left of $x_{q_i}$ in $\pi'$.

Suppose it is not, let $x_j$ be an exceeding $U^{(1)}$ between $x_{p_i}$ and  $x_{q_i}$ for some $i$ (see Figure \ref{fig:proof-4-5}). Then for the prefix $\mu'(j)$ of $\pi'$, $|\mu'(j)|_{U^{(1)}}-|\mu'(j)|_{D^{(1)}} =|\mu'(j)|_{U^{(2)}}-|\mu'(j)|_{D^{(2)}}+1$.  Comparing $\mu'(j)$ and the prefix $\mu(j)$ of $\pi$, we have
$|\mu(j)|_{U^{(2)}}-|\mu(j)|_{D^{(2)}}=|\mu'(j)|_{U^{(2)}}-|\mu'(j)|_{D^{(2)}}+1$ and $|\mu(j)|_{U^{(1)}}-|\mu(j)|_{D^{(1)}}=|\mu'(j)|_{U^{(1)}}-|\mu'(j)|_{D^{(1)}}$. It follows that $|\mu(j)|_{U^{(1)}}-|\mu(j)|_{D^{(1)}}=|\mu(j)|_{U^{(2)}}-|\mu(j)|_{D^{(2)}}$. Since $x_j$ is a $U^{(1)}$, we have $|\mu(j-1)|_{U^{(1)}}-|\mu(j-1)|_{D^{(1)}}=|\mu(j-1)|_{U^{(2)}}-|\mu(j-1)|_{D^{(2)}}-1$. This contradicts that $\pi$ is a 2-Motzkin path. The claim is proved, and the subword from $x_{b_1}$ to $x_{b_2}$ is recovered.

By (C1), there are no other $D^{(2)}$ steps between $x_a$ and $x_{b_2}$ in $\pi$. Hence we can locate $x_a$ by finding the nearest $D^{(2)}$ step to the left of $x_{b_2}$ in $\pi'$.
This proves the assertion for $\pi\in\overline{\M}^{(2)}_n$.
By a similar argument, one can prove the assertion for $\pi\in\widehat{\M}^{(2)}_n$.
\end{proof}

\begin{figure}[ht]
\begin{center}
\psfrag{A}[][][1]{$\pi:$}
\psfrag{B}[][][1]{$\pi':$}
\psfrag{a}[][][1]{$a$}
\psfrag{b2}[][][1]{$b_2$}
\psfrag{b1}[][][1]{$b_1$}
\psfrag{pa}[][][1]{$p_{i-1}$}
\psfrag{j}[][][1]{$j$}
\psfrag{qa}[][][1]{$q_{i-1}$}
\psfrag{pi}[][][1]{$p_{i}$}
\psfrag{qi}[][][1]{$q_{i}$}
\includegraphics[width=4in]{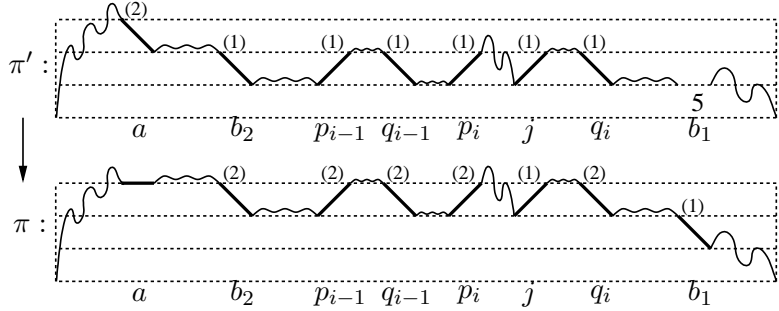}
\end{center}
\caption{\small Illustration for the proof of Proposition \ref{pro:d=2--Backward}.} \label{fig:proof-4-5}
\end{figure}

By Propositions \ref{pro:Left->Right} and \ref{pro:d=2--Backward}, we can recover the steps of the letters 4 (respectively, 5) of the word $\omega$ in the order from right to left. Over iterations, we construct the path $\phi_2^{-1}(\omega)\in\M^{(2)}_n$.
We have just established the bijection $\phi_2:\M^{(2)}_n\rightarrow\T^{(5)}_n$. For readers' reference, we list the one-to-one correspondence between the twenty-six SYTs with 5 cells and the paths in $\M_5^{(2)}$ at the end of this paper.

\section{The inductive stage}

Suppose a bijection $\phi_k:\M^{(k)}_n\rightarrow\T^{(2k+1)}$ has been established up to $k=d-1$. We shall establish a bijection $\phi_d:\M^{(d)}_n\rightarrow\T^{(2d+1)}_n$, $d\ge 3$.
\subsection{The map $\phi_d$}
Let $\pi=x_1x_2\cdots x_n\in\M^{(d)}_n$. As described in section 3,
we start from the first level step enclosed in a matching $(U^{(d)},D^{(d)})$-pair (respectively, start from the first $U^{(d)}$), say $x_a$, and determine a down-step sequence $(x_{b_d}, \dots,x_{b_1})=(D^{(d)},\dots,D^{(1)})$ such that $x_{b_d}$ is the first $D^{(d)}$ after $x_a$. For $1\le k\le d-1$, the step $x_{b_k}$ is located by a greedy procedure that searches for the nearest $D^{(k)}$. This is described in (i) and (ii) of the following algorithm.
Part (i) is for the greatest color $d$: if we encounter a critical $U^{(d)}$ before a $D^{(d-1)}$, we mark this $U^{(d)}$ and the first $D^{(d)}$ afterwards. Part (ii) is for the other colors: if we encounter a critical $U^{(k+1)}$ before a $D^{(k)}$, we mark this $U^{(k+1)}$ and go back to (ii) to search for the nearest $D^{(k+1)}$ afterwards. To describe the algorithm
conveniently, we use variables $P$ and $K$ as indices of steps and colors, respectively.

\noindent
{\bf Algorithm E.}

\begin{enumerate}
\item[(E1)] $\pi\in\overline{\M}^{(d)}_n$. Find the first level step enclosed in a matching $(U^{(d)},D^{(d)})$-pair, say $x_a$, and find the first $D^{(d)}$ after $x_a$, say $x_{b_d}$. Set $P=b_d$ and $K=d-1$. Go to (i).
\begin{itemize}
  \item[(i)] When $K=d-1$, we search for the nearest $D^{(d-1)}$ after $x_P$. If we encounter a critical $U^{(d)}$ before a $D^{(d-1)}$ then we mark this $U^{(d)}$ and the first $D^{(d)}$ afterwards. Set $x_P$ to this $D^{(d)}$ and repeat (i) until a $D^{(d-1)}$ is found.
      We mark this $D^{(d-1)}$,  set $x_P$ to this step, set $K=d-2$, and go to (ii).
  \item[(ii)] When $K\le d-2$, we search for the nearest $D^{(K)}$ after $x_P$. If we encounter a critical $U^{(K+1)}$ before a $D^{(K)}$ then we mark this $U^{(K+1)}$ and set $x_P$ to this step. Set $K=K+1$ and go to either of (i) and (ii).
      Otherwise, we locate the $D^{(K)}$. There are two cases. If $K=1$ then we are done and this step is the requested $x_{b_1}=D^{(1)}$.  If $K\ge 2$ then mark this $D^{(K)}$,  set $x_P$ to this step, set $K=K-1$ and go back to (ii).
\end{itemize}
When the above process stops, we obtain the sequence $(x_{b_d},\dots,x_{b_1})$ associated to $x_a$, where $x_{b_i}$ is the first marked $D^{(i)}$, for $1\le i\le d$. We form a new sequence $\pi'$ from $\pi$ as follows. Replace $(x_a,x_{b_d},\dots,x_{b_1})$ by $(D^{(d)},D^{(d-1)},\dots, D^{(1)},2d+1)$ and degrade every marked $U^{(k)}$ and $D^{(k)}$ to $U^{(k-1)}$ and $D^{(k-1)}$, respectively, for $2\le k\le d$. If $\pi'$ contains a level step enclosed in a matching $(U^{(d)},D^{(d)})$-pair then go back to (E1) and proceed to process $\pi'$, otherwise go to (E2).
\item[(E2)] $\pi\in\widehat{\M}^{(d)}_n$. Find the first $U^{(d)}$, say $x_a$, and find the first $D^{(d)}$ after $x_a$, say $x_{b_d}$. Set $P=b_d$ and $K=d-1$. Do the same procedures as (i) and (ii) of (E1) until we locate the requested step $x_{b_1}=D^{(1)}$. Then we form a new sequence $\pi'$ from $\pi$ as follows. Replace $(x_a,x_{b_d},\dots,x_{b_1})$ by $(L,D^{(d-1)},\dots,D^{(1)},2d)$ and degrade every marked $U^{(k)}$ and $D^{(k)}$ to $U^{(k-1)}$ and $D^{(k-1)}$, respectively, for $2\le k\le d$. If $\pi'$ contains a $U^{(d)}$ step then go back to (E2) and proceed to process $\pi'$, otherwise go to (E3).
\item[(E3)] $\pi\in\M^{(d-1)}_n$. By induction, we obtain the path $\phi_d(\pi)=\phi_{d-1}(\pi)$.
\end{enumerate}

\noindent
{\bf Remarks:} The current form of algorithm E is for $\phi_d$, $d\ge 3$. Note that the map $\phi_2$ can be integrated into $\phi_d$ by distinguishing $d=2$ and $d\ge 3$ in (i) of algorithm E.

\smallskip
We have the following observation for each iteration of the above algorithm.

\begin{lem} \label{lem:observation} In (E1) and (E2), the updated steps of $\pi$  are the step $x_a$, the down-step sequence $(x_{b_d}, \dots,x_{b_1})=(D^{(d)},\dots,D^{(1)})$ and a number of marked $(U^{(k)},D^{(k)})$-pairs in the subword between $x_{b_i}$ and $x_{b_{i-1}}$, for $2\le i\le k\le d$, such that every marked $(U^{(k+1)},D^{(k+1)})$-pair is nested in a marked $(U^{(k)},D^{(k)})$-pair, for $k\le d-1$.
\end{lem}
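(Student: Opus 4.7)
The plan is to recast the execution of algorithm~E as a system of nested sub-searches, and then prove both claims of the lemma by induction on recursion depth. Define a sub-search $\sigma(K,P)$ to be the execution that begins at state $(K,P)$ and scans forward for the nearest $D^{(K)}$, handling every intervening critical $U^{(K+1)}$ recursively: when such a critical step at position $P_u$ is encountered (with $K+1\le d$), it is marked, the recursive call $\sigma(K+1,P_u)$ is spawned, and upon that call's termination at some $D^{(K+1)}$-position $P_v$, the pair $(x_{P_u},x_{P_v})$ is recorded as a marked $(U^{(K+1)},D^{(K+1)})$-pair. The base case $K=d$ is simply the scan for the first $D^{(d)}$ after $P$, matching the top-branch of step~(i). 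In this reformulation the body of (E1)/(E2) is exactly the sequence of main sub-searches $\sigma(d-1,b_d),\sigma(d-2,b_{d-1}),\ldots,\sigma(1,b_2)$, where the terminating position of each feeds the next.

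The core step is the \emph{scope invariant}: every call $\sigma(K,P_s)$ terminates at some position $P_e>P_s$ with $x_{P_e}=D^{(K)}$, and every mark placed during $\sigma(K,P_s)$ and its descendant sub-searches lies in the interval $(P_s,P_e]$. This is proved by induction on recursion depth: the only actions within $\sigma(K,P_s)$ are scanning, marking encountered critical $U^{(K+1)}$'s, and the recursive calls $\sigma(K+1,\cdot)$ — each of which, by hypothesis, confines its own marks to a sub-interval that lies strictly before the scan's eventual $D^{(K)}$. From this, the location claim follows at once: any marked $(U^{(k)},D^{(k)})$-pair arises inside a nested chain of sub-searches whose outermost element is a main sub-search $\sigma(i-1,b_i)$ at some level $i-1\le k-1$; since the scope of $\sigma(i-1,b_i)$ is $(b_i,b_{i-1}]$, the pair lies in the subword between $x_{b_i}$ and $x_{b_{i-1}}$, with $2\le i\le k\le d$ as required.

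The nesting property then falls out of the same chain structure. Any marked $(U^{(k+1)},D^{(k+1)})$-pair is produced inside a sub-search $\sigma(k+1,P_u)$ spawned by an enclosing call $\sigma(k,\cdot)$; when that enclosing $\sigma(k,\cdot)$ is itself a sub-search spawned by an outer critical $U^{(k)}$-mark, the outer mark generates a marked $(U^{(k)},D^{(k)})$-pair whose endpoints strictly enclose the inner pair (by the scope invariant applied to $\sigma(k,\cdot)$). This yields the required nesting. The principal obstacle I anticipate is the bookkeeping: algorithm~E as written flips between steps~(i) and~(ii) and between incrementing and decrementing $K$, so one must verify carefully that the recursive reformulation faithfully captures every mark, that each critical up-step marked inside $\sigma(k,\cdot)$ is paired with precisely the $D^{(k+1)}$ returned by its own spawned sub-search (and not any other down step), and that the main sub-search at level $i-1$ cleanly terminates at $b_{i-1}$. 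Once these invariants are pinned down, both the location and the nesting conclusions follow smoothly.
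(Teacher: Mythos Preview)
The paper states this lemma as a bare observation with no proof; the authors regard the structure as self-evident from the way parts~(i) and~(ii) of Algorithm~E are written. Your recursive reformulation via the sub-searches $\sigma(K,P)$ and the scope invariant is a correct and natural way to make that observation rigorous: each increment of $K$ in step~(ii) corresponds exactly to spawning a child sub-search, each decrement to returning from one, and step~(i) is the base case $\sigma(d,\cdot)$ wrapped inside $\sigma(d-1,\cdot)$. Once that translation is fixed, the scope invariant is immediate by induction on recursion depth, and both the location claim and the nesting claim follow from the call-stack discipline, as you outline.

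One point worth making explicit in a full write-up: when the enclosing $\sigma(k,\cdot)$ is one of the \emph{main} sub-searches $\sigma(i-1,b_i)$ rather than a spawned one, the $(U^{(k+1)},D^{(k+1)})$-pairs it produces have color $k+1=i$ and are the outermost pairs in the subword between $x_{b_i}$ and $x_{b_{i-1}}$. The lemma's index constraint $2\le i\le k$ exempts these from the nesting requirement (there are no marked $(U^{(i-1)},D^{(i-1)})$-pairs in that subword), so your conditional ``when that enclosing $\sigma(k,\cdot)$ is itself spawned by an outer critical $U^{(k)}$-mark'' covers precisely the cases for which nesting is asserted. The bookkeeping obstacle you anticipate --- termination of each $\sigma(K,\cdot)$ and the matching of each marked $U^{(k)}$ with the $D^{(k)}$ returned by its own spawned call --- is real but routine once the recursive picture is in place, and is presumably why the paper left the lemma unproved.
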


\begin{exa} \label{exa:d=3} {\rm
 Let us revisit the path $\pi=x_1\cdots x_{25}$ shown in Figure \ref{fig:critical-crucial}(a) in section 3. By (E1), we locate the steps $(x_a,x_{b_3})=(x_6,x_8)$. Searching for the nearest $D^{(2)}$ after $x_{b_3}$, we encounter a critical $U^{(3)}$ step, $x_9$. So mark $x_9$ and the first $D^{(3)}$ step, $x_{11}$, afterwards. Then we locate a $D^{(2)}$ step, $x_{12}=x_{b_2}$, and search for the nearest $D^{(1)}$ after $x_{b_2}$. On the way, we encounter a critical $U^{(2)}$ step, $x_{13}$, and go in a loop that searches for the nearest $D^{(2)}$ after $x_{13}$. Then we encounter a critical $U^{(3)}$ step, $x_{15}$. So mark $x_{15}$ and the first $D^{(3)}$ step, $x_{17}$, afterwards. Then we locate a $D^{(2)}$ step, $x_{19}$. Before we get to the $D^{(1)}$ step $x_{b_1}=x_{23}$, we encounter another critical $U^{(2)}$ step, $x_{20}$, and locate a $D^{(2)}$ step, $x_{21}$, afterwards.
We form a new sequence $\pi'$ from $\pi$ as follows. Replace $(x_a,x_{b_3},x_{b_2},x_{b_1})$ by $(D^{(3)},D^{(2)},D^{(1)},7)$. Moreover, degrade the marked pairs $(x_9,x_{11})$ and $(x_{15},x_{17})$  to  $(U^{(2)},D^{(2)})$, and degrade the marked pairs $(x_{13},x_{19})$ and $(x_{20},x_{21})$  to $(U^{(1)},D^{(1)})$. The resulting path $\pi'\in\widehat{\M}^{(3)}_{24}$ is shown in Figure \ref{fig:critical-crucial}(b) in section 3.

In Figure \ref{fig:critical-crucial}(b), by (E2), we locate the steps $(x_a,x_{b_3})=(x_5,x_6)$ of $\pi'$, and then we get to a $D^{(2)}$ step, $x_7=x_{b_2}$. Searching for the nearest $D^{(1)}$ after $x_{b_2}$, we encounter a critical $U^{(2)}$ step, $x_{10}$, and go in a loop that searches for the nearest $D^{(2)}$ after $x_{10}$. Then we locate a  $D^{(2)}$, step $x_{11}$, and  get to a $D^{(1)}$ step, $x_{12}=x_{b_1}$.
 We form a new sequence $\pi''$ from $\pi'$ as follows. Replace $(x_a,x_{b_3},x_{b_2},x_{b_1})$ by  $(L, D^{(2)},D^{(1)}, 6)$ and degrade the marked pair $(x_{10},x_{11})$ to  $(U^{(1)},D^{(1)})$. The resulting path $\pi''\in\widehat{\M}^{(3)}_{23}$ is shown in Figure \ref{fig:critical-up-step-25}(a). By (E2) once again, we get the steps $(x_{14},x_{22},x_{24},x_{25})=(U^{(3)},D^{(3)},D^{(2)},D^{(1)})$ replaced by $(L,D^{(2)},D^{(1)}, 6)$ and obtain the path shown in Figure \ref{fig:critical-up-step-25}(b).
}
\end{exa}

\begin{figure}[ht]
\begin{center}
\includegraphics[width=4.75in]{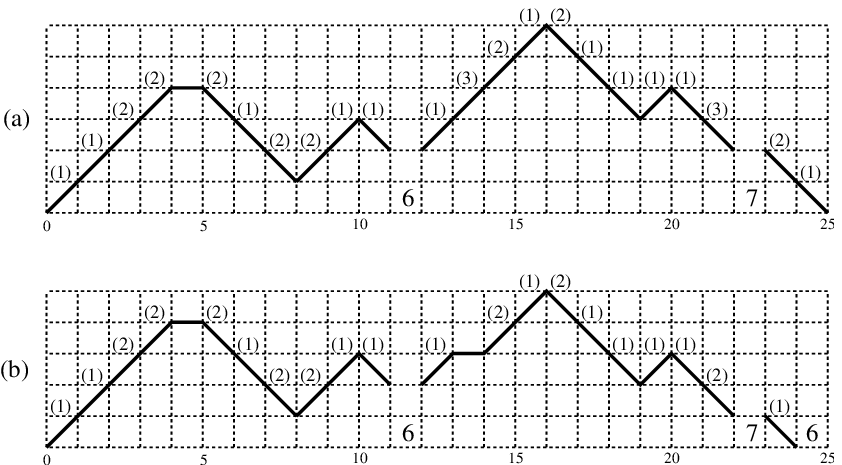}
\end{center}
\caption{\small A 3-Motzkin path in $\widehat{\M}^{(3)}_{23}$
and a 2-Motzkin path in $\overline{\M}^{(2)}_{22}$.} \label{fig:critical-up-step-25}
\end{figure}

We shall prove that the following properties of the word $\phi_d(\pi)$ constructed above.

\begin{pro} \label{pro:Left->Right(2d+1)} For a path $\pi\in\M^{(d)}_n$, the following results hold.
 \begin{enumerate}
 \item  We have $\phi_d(\pi)\in\T^{(2d+1)}$. Specifically, $\phi_d(\pi)\in\widetilde{\T}^{(2d)}_n$ if $\pi\in\widehat{\M}^{(d)}_n$, and $\phi_d(\pi)\in\widetilde{\T}^{(2d+1)}_n$ if $\pi\in\overline{\M}^{(d)}_n$.
 \item If $\phi_d(\pi)$ contains more than one letter $2d+1$  (respectively, $2d$) then upon iterations these letters appear in the order from left to right.
 \end{enumerate}
 \end{pro}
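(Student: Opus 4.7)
\medskip

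\textbf{Strategy.} The plan is to prove both parts simultaneously by double induction: an outer induction on $d$, with base $d=2$ supplied by Propositions~\ref{pro:one-iteration} and~\ref{pro:Left->Right}, and an inner induction on the number of successive iterations of (E1) and (E2) needed to reduce $\pi$ to a path in $\M^{(d-1)}_n$. The inner-induction step requires showing that one iteration of (E1) transforms $\pi\in\overline{\M}^{(d)}_n$ into $\pi'\in\M^{(d)}_{n-1}$ (whose path-part remains a valid $d$-Motzkin path) together with one new letter $2d+1$, and similarly one iteration of (E2) produces a valid $\pi'$ with one new letter $2d$ and one fewer $U^{(d)}$ step. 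Once the path-part has shrunk to a path in $\M^{(d-1)}_n$, (E3) invokes $\phi_{d-1}$ and the outer hypothesis gives a Yamanouchi word on $\{1,\ldots,2d-1\}$.

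\textbf{Verifying that $\pi'$ is a $d$-Motzkin path.} Condition (M1) is immediate: each marked pair $(U^{(k)},D^{(k)})\mapsto(U^{(k-1)},D^{(k-1)})$ preserves balance at both colors, and the replacement of the located tuple $(x_a,x_{b_d},\ldots,x_{b_1})$ by $(D^{(d)},D^{(d-1)},\ldots,D^{(1)},2d{+}1)$ or $(L,D^{(d-1)},\ldots,D^{(1)},2d)$ is also balanced. The content is (M2). Following the template of the proof of Proposition~\ref{pro:one-iteration}, I would assume for contradiction that some prefix $\mu'(j)$ of $\pi'$ first violates $|\mu'(j)|_{U^{(k)}}-|\mu'(j)|_{D^{(k)}}\ge |\mu'(j)|_{U^{(k+1)}}-|\mu'(j)|_{D^{(k+1)}}$ for some color $k$, so that $x_j$ is either a $U^{(k+1)}$ or a $D^{(k)}$. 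Using Lemma~\ref{lem:observation}, the marked $(U^{(i)},D^{(i)})$-pairs form a properly nested structure inside the located tuple, and one can compare $\mu'(j)$ with the corresponding prefix $\mu(j)$ of $\pi$ region by region between consecutive marked endpoints. In each region, the difference $(|\mu'(j)|_{U^{(i)}}-|\mu'(j)|_{D^{(i)}}) -(|\mu(j)|_{U^{(i)}}-|\mu(j)|_{D^{(i)}})$ is an explicit integer depending only on how many marked pairs at each color have been crossed. Every case either contradicts condition (M2) for $\pi$, or forces $x_j$ to have been a critical $U^{(k+1)}$ that the greedy procedure (i)/(ii) of (E1) would have marked, contradicting the choice of $x_{b_k}$. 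The main obstacle is precisely this multi-level bookkeeping: for $d\ge 3$ a single iteration of (E1) can cascade through colors $d,d{-}1,\ldots$, and one must track differences at all levels in tandem; the nested structure supplied by Lemma~\ref{lem:observation} is what makes the accounting close.

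\textbf{Left-to-right order and the Yamanouchi property.} For part~(2), I would adapt the argument of Proposition~\ref{pro:Left->Right} to the multi-level setting. Let $(x_a,x_{b_d},\ldots,x_{b_1})$ and $(x_{a'},x_{b'_d},\ldots,x_{b'_1})$ be the tuples located in two consecutive (E1)-iterations; the goal is $b_1<b'_1$. Suppose instead $b'_1\le b_1$. Using the fact that $D^{(1)}$ steps are never degraded during an iteration and that the greedy search in (E1)(ii) never skips an available $D^{(1)}$, one traces the colors upward: the step $x_{b'_1}$ would have to lie between a consecutive pair $(x_{p_{k-1}},x_{p_k})$ of the first iteration's marks, and a careful prefix comparison (mirroring Eq.~(\ref{eqn:ell})) exhibits a critical $U^{(j)}$ in $\pi'$ that must have been marked by the second iteration's procedure, yielding either a violation of the minimality of $x_{b'_1}$ or the existence of an earlier $D^{(1)}$ that would have been chosen as $x_{b_1}$ in the first iteration. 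The analogous statement for (E2) follows identically. Combining parts~(1) and~(2): the letters $2d{+}1$ appear strictly left-to-right, and after all (E1)-iterations are exhausted we enter $\widehat{\M}^{(d)}_n$, where each (E2)-iteration produces a letter $2d$ whose position, by the order-preservation just proved, pairs up with a previously-created $2d{+}1$ lying to its right. Together with the Yamanouchi word on $\{1,\ldots,2d{-}1\}$ produced by $\phi_{d-1}$ on the terminal $(d{-}1)$-Motzkin path (outer induction), every prefix of $\phi_d(\pi)$ satisfies the Yamanouchi inequality, so $\phi_d(\pi)\in\T^{(2d+1)}_n$, and the tilde versions follow because each (E1)-iteration forces at least one subsequent (E2)-iteration, so a letter $2d{+}1$ occurs iff $\pi\in\overline{\M}^{(d)}_n$ and a letter $2d$ occurs iff $\pi\in\overline{\M}^{(d)}_n\cup\widehat{\M}^{(d)}_n$.
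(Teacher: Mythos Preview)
Your overall strategy---outer induction on $d$ with base $d=2$ supplied by Propositions~\ref{pro:one-iteration} and~\ref{pro:Left->Right}, inner induction on iterations, Lemma~\ref{lem:observation} for the nested structure, and then the case-analysis templates of those two propositions---is exactly the paper's approach, and the pairing of each $2d{+}1$ with a $2d$ to its left is how the paper certifies the Yamanouchi inequality at the top two rows.

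There is, however, one structural shortcut in the paper that your sketch does not exploit. Rather than carrying out a single ``region-by-region'' comparison across the whole updated segment $x_a,\ldots,x_{b_1}$ with multi-level bookkeeping, the paper splits the analysis at $x_{b_2}$. The key observation is that the action of (E1)/(E2) on the stretch from $x_a$ through $x_{b_2}$---finding $x_{b_d}$ and then $x_{b_{d-1}},\ldots,x_{b_2}$ via the nested critical-step search---is \emph{literally} one iteration of (E1)/(E2) for $\phi_{d-1}$, applied to the colors $2,\ldots,d$ after shifting them down by one. The outer induction hypothesis therefore gives (M2) among colors $2,\ldots,d$ for all prefixes $\mu'(j)$ with $j\le b_2$ for free (and the inequality between colors $1$ and $2$ there is untouched). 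Only on the final stretch $b_2<j<b_1$ does one need to redo the Proposition~\ref{pro:one-iteration} argument, and there the marked pairs are again nested by Lemma~\ref{lem:observation}, so the same three-case analysis goes through color by color. The same ``colors-shifted'' observation gives $b_2<b'_2$ by induction in part~(ii), after which the Proposition~\ref{pro:Left->Right} argument on the last segment yields $b_1<b'_1$. Your direct multi-level accounting would in principle succeed, but it is substantially heavier than this two-stage reduction; recognizing the color shift is what collapses the general case to the $d=2$ template.
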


\begin{proof} (i)  Let $\pi=x_1\cdots x_n\in\overline{\M}^{(d)}_n$. Let $(x_{a},x_{b_d}, \dots,x_{b_1})$ be the sequence $(L,D^{(d)},\dots,D^{(1)})$ of $\pi$ located in (E1) and
let $\pi'$ be the path obtained from $\pi$ by (E1). We shall verify that $\pi'$ satisfies the conditions (M1) and (M2) for $d$-Motzkin words.

It is straightforward to check that $\pi'$ satisfies (M1). For (M2), it suffices to prove that for every prefix $\mu'(j)$ of $\pi'$ with $j<b_1$,
\begin{equation} \label{eqn:varify-(M2)}
 |\mu'(j)|_{U^{(1)}}-|\mu'(j)|_{D^{(1)}}\ge\cdots\ge |\mu'(j)|_{U^{(d)}}-|\mu'(j)|_{D^{(d)}}\ge 0.
\end{equation}

By induction hypothesis, for $j\le b_2$, we have $|\mu'(j)|_{U^{(2)}}-|\mu'(j)|_{D^{(2)}}\ge\cdots\ge |\mu'(j)|_{U^{(d)}}-|\mu'(j)|_{D^{(d)}}\ge 0$. (This is done by the same operation as in the map $\phi_{d-1}$ with colors shifted.) Moreover, we observe that $|\mu'(j)|_{U^{(1)}}-|\mu'(j)|_{D^{(1)}}\ge |\mu'(j)|_{U^{(2)}}-|\mu'(j)|_{D^{(2)}}$.

By Lemma \ref{lem:observation}, in the subword between $x_{b_2}$ and $x_{b_1}$ of $\pi$, every marked $(U^{(k+1)},D^{(k+1)})$-pair is nested in a marked $(U^{(k)},D^{(k)})$-pair, for $2\le k\le d-1$. These pairs are determined in a greedy way in (E1). By the same technique as in the proof of Proposition \ref{pro:one-iteration},
we can prove Eq.\,(\ref{eqn:varify-(M2)}) for $b_2< j< b_1$. Over iterations, we construct the word $\phi_d(\pi)\in\widetilde{\T}^{(2d+1)}_n$.

 (ii) For $\pi'$, let $(x_{a'},x_{b'_d}, \dots,x_{b'_1})$ be the located sequence $(L,D^{(d)},\dots,D^{(1)})$ in (E1).
 By induction hypothesis,  we have $b_2<b'_2$. We shall prove that $b_1<b'_1$. However, this can be proved by the same argument as in the proof of Proposition \ref{pro:Left->Right}.
 This proves the assertion for $\pi\in\overline{\M}^{(d)}_n$. For $\pi\in\widehat{\M}^{(d)}_n$, the proof is similar.
\end{proof}

\subsection{The map $\phi_d^{-1}$}

Let $\omega=x_1x_2\cdots x_n\in\T^{(2d+1)}_n$. The path corresponding to the subword of $\omega$ consisting of the letters $\{1,2,\dots,2d-1\}$ can be determined by the map $\phi_{d-1}^{-1}$. For the letters $2d$ (respectively, $2d+1$) of $\omega$, we shall recover their steps in the order from right to left, using the reverse operation of algorithm E. For convenience, in addition to variables $P$ and $K$, we use a symbol $\Lambda$ for the requested step $x_a$ in (E1) and (E2), which is either an $L$ or a $D^{(d)}$.

\noindent
{\bf Algorithm F.}

\begin{enumerate}
\item[(F1)] If $\omega$ contains no letters $2d$ then the path $\phi_d^{-1}(\omega)=\phi_{d-1}^{-1}(\omega)$ is obtained. Otherwise, go to (F2).
\item[(F2)] If $\omega$ contains a letter $2d$, then find the rightmost letter $2d$, say $x_c$. Let $\Lambda=L$. Set $P=c$ and $K=1$ and go to (i).
\begin{itemize}
 \item[(i)] When $K=1$, find the nearest $D^{(1)}$ to the left of $x_P$. Mark this $D^{(1)}$, set $x_P$ to this step, set $K=2$ and go to (ii).
 \item[(ii)] When $K\ge 2$, we search for the nearest $D^{(K)}$ to the left of $x_P$. If we encounter an exceeding $U^{(K-1)}$ before a $D^{(K)}$ then we mark this $U^{(K-1)}$ and set $x_P$ to this step. Set $K=K-1$ and go to either of (i) and (ii).
      Otherwise, we locate a $D^{(K)}$. Mark this $D^{(K)}$ and set $x_P$ to this step. There are two cases. If $K=d-1$ then go to (iii). If $K\le d-2$ then set $K=K+1$ and go back to (ii).
 \item[(iii)] Search for the nearest $\Lambda$ step to the left of $x_P$. If we encounter an exceeding $U^{(d-1)}$ before a $\Lambda$ step then we mark this $U^{(d-1)}$, set $x_P$ to this step and go to (ii), with $K=d-1$ currently.
 Otherwise, we locate a $\Lambda$ step, which is the requested $x_a$, and we are done.
\end{itemize}
     When the above process stops, we form a new sequence $\omega'$ from $\omega$ as follows. Replace $(x_a,x_c)$ by $(U^{(d)},D^{(1)})$ and upgrade every marked $U^{(k)}$ and $D^{(k)}$ between $x_a$ and $x_c$ to  $U^{(k+1)}$ and $D^{(k+1)}$, respectively, for $1\le k\le d-1$. If $\omega'$ contains no letter $2d$ then go to (F3); otherwise, go back to (F2) and proceed to process $\omega'$.
\item[(F3)] If $\omega$ contains a letter $2d+1$ then find the rightmost letter $2d+1$, say $x_c$. Let $\Lambda=D^{(d)}$. Do the same procedure as in (i)-(iii) of (F2) until we locate the requested step $x_a=\Lambda$. We end up with a number of marked $U^{(k)}$ and $D^{(k)}$ between $x_a$ and $x_c$. Then we form a new sequence $\omega'$ from $\omega$ as follows. Replace $(x_a,x_c)$ by $(L,D^{(1)})$ and upgrade every marked $U^{(k)}$ and $D^{(k)}$ between $x_a$ and $x_c$ to $U^{(k+1)}$ and $D^{(k+1)}$, respectively, for $1\le k\le d-1$. If $\omega'$ contains no letter $2d+1$ then we are done; otherwise, go back to (F3) and proceed to process $\omega'$.
\end{enumerate}

The following result can be proved by the technique used in the proof of Proposition \ref{pro:d=2--Backward}.

\begin{pro} \label{pro:d>=3--Backward} For a path $\pi\in\M^{(d)}_n$, let $\pi'$ be the path obtained from $\pi$  by (E1) (respectively, (E2)) in one iteration. Then $\pi$ can be recovered from $\pi'$  by (F3) (respectively, (F2)).
\end{pro}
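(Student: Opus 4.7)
The plan is to extend the argument of Proposition~\ref{pro:d=2--Backward} inductively through the nested colour levels. Fix $\pi\in\overline{\M}^{(d)}_n$ and let $(x_a,x_{b_d},\dots,x_{b_1})=(L,D^{(d)},\dots,D^{(1)})$ be the sequence located by (E1), together with the marked $(U^{(k)},D^{(k)})$-pairs ($2\le k\le d$) described in Lemma~\ref{lem:observation}. In $\pi'$ the tuple $(x_a,x_{b_d},\dots,x_{b_1})$ is replaced by $(D^{(d)},D^{(d-1)},\dots,D^{(1)},2d+1)$ and every marked pair is degraded from colour $k$ to colour $k-1$. The goal is to show that, starting from $x_c=x_{b_1}$, algorithm (F3) visits exactly these updated steps in reverse order and locates $x_a$ as the final $\Lambda=D^{(d)}$.

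First I would establish the core correspondence, generalising the claim in the proof of Proposition~\ref{pro:d=2--Backward}: a step $x_p$ appearing in $\pi'$ as a $U^{(k-1)}$ strictly between two consecutive marked $D^{(k-1)}$ endpoints is exceeding if and only if $x_p$ was a marked critical $U^{(k)}$ of $\pi$. The forward direction is a direct counting argument on the prefix $\mu'(p)$: degrading any marked pair that strictly encloses $x_p$ preserves both the colour-$k$ and the colour-$(k-1)$ differences, while the degradation of $x_p$ itself shifts the colour-$(k-1)$ difference up by one and the colour-$k$ difference down by one, turning the critical equality satisfied by $\mu(p)$ into the exceeding relation for $\mu'(p)$. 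The converse runs exactly as in Proposition~\ref{pro:d=2--Backward}: an exceeding $U^{(k-1)}$ interior to a marked chain of $\pi'$ with no corresponding critical $U^{(k)}$ in $\pi$ would, upon pulling back through the degradation, give a prefix of $\pi$ violating (M2).

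With this correspondence in hand, I would verify that (F3) reconstructs the chain by induction on the colour level $K$. The base case $K=1$ follows from part (i): the nearest $D^{(1)}$ to the left of $x_c$ in $\pi'$ is necessarily the rightmost marked $D^{(1)}$ endpoint, because (E1) chose $x_{b_1}$ as the nearest $D^{(1)}$ after the previous marked step and the degradation creates no new $D^{(1)}$ in that window. The inductive step, corresponding to (ii) and (iii) of (F3), is the key piece of book-keeping: when the algorithm is searching for the nearest $D^{(K)}$ to the left and meets an exceeding $U^{(K-1)}$, the correspondence above forces this $U^{(K-1)}$ to be a degraded critical $U^{(K)}$ of $\pi$, so the recursive descent to colour $K-1$ tracks exactly the nested search installed by (ii) of (E1).

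The hard part will be this nested book-keeping: showing that the greedy right-to-left search of (F3) neither spuriously marks a step unmarked by (E1) nor misses a marked step. The former reduces to the converse direction of the correspondence above; the latter rests on the greediness of (E1) in the forward pass, which guarantees that no unmarked $D^{(k)}$ lies strictly between consecutive marked steps of colour $k$, together with Lemma~\ref{lem:observation} on the nesting of the marked pairs. Once every marked step is retrieved, part (iii) locates $x_a$ as the nearest $\Lambda=D^{(d)}$ to the left of $x_{b_d}$ in $\pi'$, which is correct since (E1) placed no $D^{(d)}$ strictly between $x_a$ and $x_{b_d}$ in $\pi$ and the degradation creates none. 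The case $\pi\in\widehat{\M}^{(d)}_n$ recovered by (F2) is entirely symmetric with $\Lambda=L$ in place of $\Lambda=D^{(d)}$.
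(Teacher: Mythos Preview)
Your proposal is correct and takes essentially the same approach as the paper: both establish the critical-to-exceeding correspondence via the prefix-counting argument of Proposition~\ref{pro:d=2--Backward}, invoke Lemma~\ref{lem:observation} on the nesting structure of the marked pairs, and then verify recursively through the colour levels that the greedy right-to-left search of (F3) retraces exactly the marks laid down by (E1). The only difference is organisational---the paper treats the segment between $x_{b_2}$ and $x_{b_1}$ by recursion on nesting depth and then invokes the induction hypothesis on $d$ (the $\phi_{d-1}$ case with colours shifted) for the remaining stretch back to $x_a$, whereas you run a single induction on the algorithm variable $K$---but the underlying bookkeeping is the same.
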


\begin{proof} Let $\pi=x_1\cdots x_n\in\overline{\M}^{(d)}_n$. Let $(x_{a},x_{b_d}, \dots,x_{b_1})$ be the sequence $(L,D^{(d)},\dots,D^{(1)})$ of $\pi$ located in (E1) and
let $\pi'$ be the path obtained from $\pi$ by (E1).

 To recover $\pi$ from $\pi'$, we first restructure the subword from $x_{b_1}$ to $x_{b_2}$. By Lemma \ref{lem:observation}, in the subword between $x_{b_2}$ and $x_{b_1}$ of $\pi$, every marked $(U^{(k+1)},D^{(k+1)})$-pair is nested in a marked $(U^{(k)},D^{(k)})$-pair, for $2\le k\le d-1$. These pairs are determined in a greedy way.

Let $(x_{p_1},x_{q_1}),\dots,(x_{p_m},x_{q_m})$ be the marked $(U^{(k+1)},D^{(k+1)})$-pairs nested in a marked $(U^{(k)},D^{(k)})$-pair $(x_{s},x_{t})$ between $x_{b_2}$ and $x_{b_1}$ in $\pi$, for some $m$ and $k$.
We shall prove that these pairs can be recovered by (F3). Traverse $\pi'$ from right to left, we observe that
$x_{q_m}$ is the nearest $D^{(k)}$ to the left of $x_{t}$, each $x_{q_{i}}$ is the nearest $D^{(k)}$ to the left of $x_{p_{i+1}}$, $1\le i\le m-1$, and there are no $D^{(k)}$ between $x_{s}$ and $x_{p_1}$ since otherwise we would get to a $D^{(k)}$ before $x_{t}$ in $\pi$. It suffices to prove that $x_{p_{i}}$ can be located from right to left, starting from $x_{q_{i}}$ in $\pi'$.

If there are no marked $(U^{(k+2)},D^{(k+2)})$-pairs nested in $(x_{p_{i}},x_{q_{i}})$ in $\pi$, then by an argument similar to the proof of Proposition \ref{pro:d=2--Backward}, $x_{p_{i}}$ is the nearest exceeding $U^{(k)}$ to the left of $x_{q_{i}}$. Otherwise, there are marked $(U^{(k+2)},D^{(k+2)})$-pairs nested in $(x_{p_{i}},x_{q_{i}})$ in $\pi$. Then by the same argument as above, we prove inductively that these pairs can be recovered and hence $x_{p_{i}}$ is located. This recovers the subword from $x_{b_1}$ to $x_{b_2}$.

By induction hypothesis and by (ii) and (iii) of algorithm F, we can recover the sequence $x_{b_2},\dots,x_{b_d},x_a$ from right to left, along with the marked $(U^{(k)},D^{(k)})$-pairs between $x_{b_i}$ and $x_{b_{i+1}}$, for $2\le i< k\le d$. (This is done by the same operation as in the map $\phi_{d-1}$ with colors shifted.)
This proves the assertion for $\pi\in\overline{\M}^{(d)}_n$. For the other case $\pi\in\widehat{\M}^{(d)}_n$, the proof is similar.
\end{proof}

By Propositions \ref{pro:Left->Right(2d+1)} and \ref{pro:d>=3--Backward}, we can recover the steps of the letters $2d$ (respectively, $2d+1$) of the word $\omega$ in the order from right to left. Over iterations, we construct the path $\phi_d^{-1}(\omega)\in\M^{(d)}_n$. This proves Theorem \ref{thm:main-bijection}.

\smallskip
From the construction of the bijection $\phi_d$, we have the
following relation regarding the level steps of $d$-Motzkin paths
and the odd columns of SYTs.

\begin{cor} \label{cor:5-5} The number of level steps of a path $\pi\in\M^{(d)}_n$ equals the number of odd columns of the SYT $\phi_d(\pi)\in\T^{(2d+1)}_n$.
\end{cor}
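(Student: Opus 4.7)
The plan is to proceed by induction on $d$, with base case $d=0$: the set $\M^{(0)}_n$ contains only the all-level path, which is matched with the one-row tableau in $\T^{(1)}_n$ whose $n$ columns all have height $1$.

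For the inductive step, I would first rewrite the number of odd columns of $\phi_d(\pi)$ in terms of its shape $\lambda=(\lambda_1,\dots,\lambda_{2d+1})$. Since the number of columns of height exactly $h$ is $\lambda_h-\lambda_{h+1}$ (with the convention $\lambda_{2d+2}:=0$), a telescoping sum gives the number of odd columns as the alternating sum
\[
\lambda_1-\lambda_2+\lambda_3-\lambda_4+\cdots+\lambda_{2d+1}.
\]
Thus the task becomes showing that this alternating sum equals $\ell(\pi)$, the number of level steps of $\pi$.

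Next I would exploit the recursive construction in Algorithm E. Let $n_{2d+1}$ and $n_{2d}$ denote the numbers of iterations of (E1) and (E2) performed on $\pi$, and let $\pi^{\mathrm{mid}}\in\M^{(d-1)}_{n-n_{2d+1}-n_{2d}}$ be the intermediate $(d-1)$-Motzkin path that is fed into $\phi_{d-1}$ by (E3). Each iteration of (E1) (respectively (E2)) produces exactly one letter $2d+1$ (respectively $2d$) and otherwise only relabels path steps, so $\lambda_{2d+1}(\phi_d(\pi))=n_{2d+1}$, $\lambda_{2d}(\phi_d(\pi))=n_{2d}$, and $\lambda_i(\phi_d(\pi))=\lambda_i(\phi_{d-1}(\pi^{\mathrm{mid}}))$ for $1\le i\le 2d-1$. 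Tracking level steps through Algorithm E: (E1) turns the level step $x_a$ into $D^{(d)}$ (losing one level step), (E2) turns the up step $x_a=U^{(d)}$ into a fresh $L$ (gaining one), while the degrades of marked $(U^{(k)},D^{(k)})$-pairs merely shift colors without touching level steps. Consequently $\pi^{\mathrm{mid}}$ has exactly $\ell(\pi)-n_{2d+1}+n_{2d}$ level steps.

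The induction hypothesis applied to $\phi_{d-1}(\pi^{\mathrm{mid}})$ then yields $\lambda_1-\lambda_2+\cdots+\lambda_{2d-1}=\ell(\pi)-n_{2d+1}+n_{2d}$ for its shape, and adding the contribution $-\lambda_{2d}(\phi_d(\pi))+\lambda_{2d+1}(\phi_d(\pi))=-n_{2d}+n_{2d+1}$ from the two new rows produces $\ell(\pi)$, completing the induction. The main obstacle is the bookkeeping inside (E1) and (E2): one must check carefully that the color-degrades of marked matching pairs never convert level steps into non-level steps or vice versa, and that the letters placed at the $x_{b_1}$-positions do not perturb the lower row counts inherited from $\phi_{d-1}(\pi^{\mathrm{mid}})$. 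Both facts follow immediately from the explicit step-replacement rules in Algorithm E, but they should be written out to make the induction rigorous.
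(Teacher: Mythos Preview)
Your argument is correct. The key observations---that each pass of (E1) destroys exactly one level step while writing the odd letter $2d+1$, each pass of (E2) creates exactly one level step while writing the even letter $2d$, and that all color-degrades leave level steps untouched---are precisely what the paper uses as well.

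The organization differs, however. The paper does not invoke the alternating-sum identity $\#\{\text{odd columns}\}=\lambda_1-\lambda_2+\cdots+\lambda_{2d+1}$ nor induct on $d$. Instead it treats the quantity $\kappa(\pi)-\kappa(T)$ as an invariant and checks, one iteration at a time through Algorithms E, C, and A, that both $\kappa$-values move by the same $\pm 1$ (since adding a cell in an odd row increments the odd-column count, and adding a cell in an even row decrements it); the invariant is then evaluated at the terminal all-level-step path, where both quantities equal $\ell$. Your route packages the same bookkeeping more algebraically: the alternating-sum formula lets you read off the contribution of rows $2d$ and $2d+1$ directly as $-n_{2d}+n_{2d+1}$, and the induction on $d$ absorbs everything below. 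The paper's version is perhaps more visual, while yours makes the dependence on shape explicit and avoids having to reason separately about how a single cell changes the parity of a column height.
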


\begin{proof} Let $T=\phi_d(\pi)$. Let $\kappa(\pi)$ (respectively, $\kappa(T)$) be the number of level steps of $\pi$ (respectively, odd columns of $T$).
Let $\pi'$ be the path obtained from $\pi$ in one iteration of the algorithm E. Let $T'=\phi_d(\pi')$. Note that the shape of $T'$ differs from that of $T$ by one cell.

By (E1), we convert a level step of $\pi$, $x_a$, into a down step and replace a $D^{(1)}$ step by an odd entry. The other updated steps are modified with color only  (no extra level steps are created).  So a level step of $\pi$ is associated with the last cell of an odd column of $T$. Hence $\kappa(\pi')=\kappa(\pi)-1$ and $\kappa(T')=\kappa(T)-1$.
By (E2), we convert an up step of $\pi$, $x_a$, into a level step and replace a $D^{(1)}$ step by an even entry. Likewise, the other updated steps are modified with color only. Hence $\kappa(\pi')=\kappa(\pi)+1$ and $\kappa(T')=\kappa(T)+1$.

These observations also hold for the algorithms C and A. Over iterations, we eventually arrive at a path, say $\pi''$, consisting of $\ell$ level steps on the $x$-axis, for some $\ell>0$. By (A3), the corresponding SYT $T''=\phi_1(\pi'')$ consists of $\ell$ columns with one cell. Hence, we have
\[
\kappa(\pi)-\kappa(T)=\kappa(\pi')-\kappa(T')=\cdots=\kappa(\pi'')-\kappa(T'')=0,
\]
as required.
\end{proof}

As shown in Figure \ref{fig:map-list}, the bijection $\phi_2$ carries a path  $\pi\in\M_5^{(2)}$ with $k$ level steps to an SYT $\phi_2(\pi)\in\T_5^{(5)}$ with $k$ odd columns.

\begin{figure}[ht]
\begin{center}
\includegraphics[width=6in]{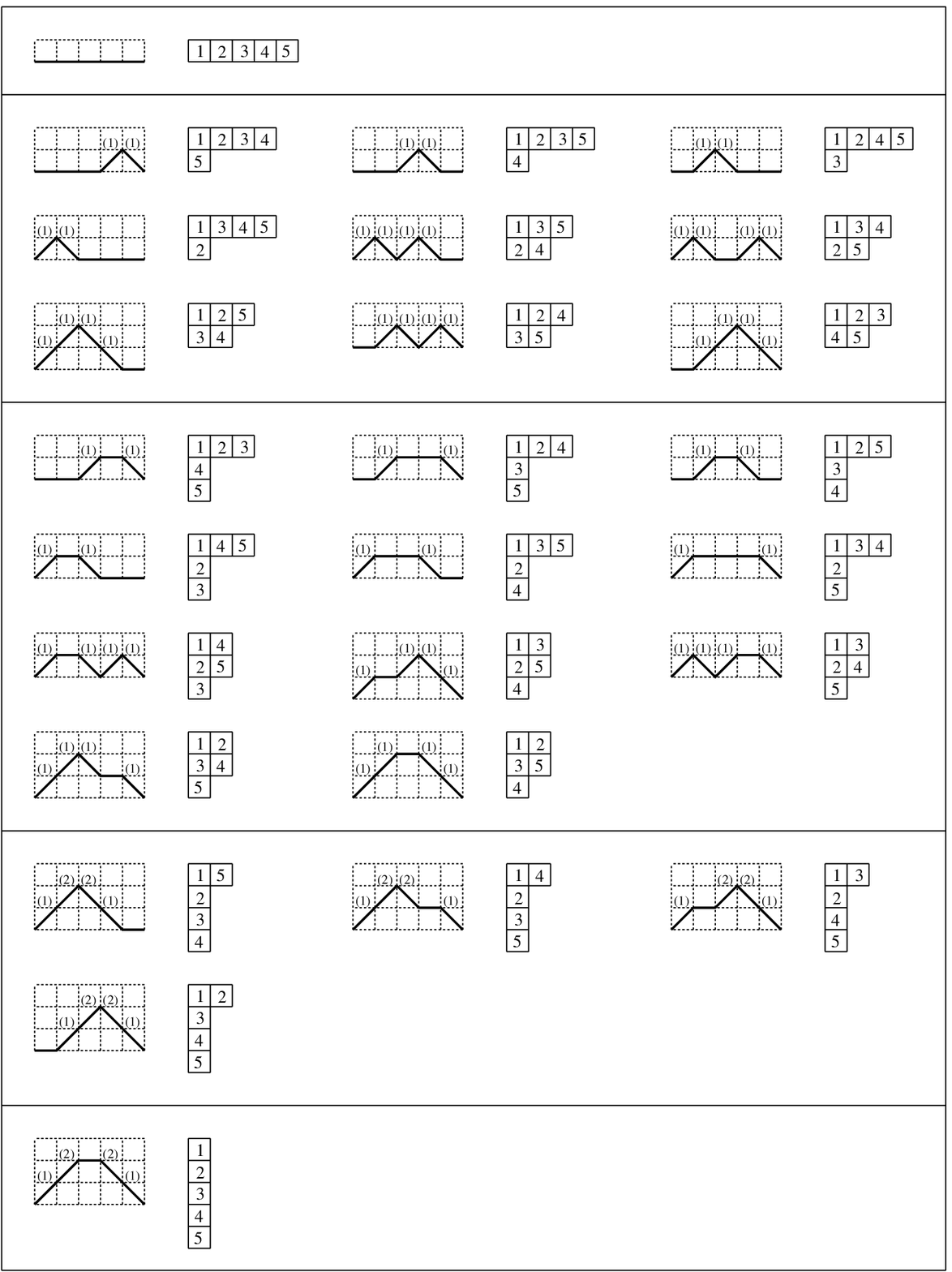}
\end{center}
\caption{\small The bijection $\phi_2:\M_5^{(2)}\rightarrow\T_5^{(5)}$.} \label{fig:map-list}
\end{figure}

\section{Concluding remarks}

In this paper, the notion of colored Motzkin paths is introduced and
a bijection with standard Young tableaux of bounded height is
established. Moreover, the bijection reveals a link between the two sets $\T_n^{(2d+1)}$ and $\T_n^{(2d)}$ via a property of level steps of the paths.

Using the bijection, we can investigate SYTs and other related objects in
terms of lattice paths. Some interesting work is in progress. For example, it is also known that an SYT $T$ with $n$ cells corresponds to an involution $\sigma$ on the set $\{1,\dots,n\}$ by the
RSK algorithm, with the number of odd columns of $T$ equal to
the number of fixed points of $\sigma$ (e.g.,
see \cite[Exercise 7, p.135]{Sagan}). It is interesting to establish a direct connection between colored Motzkin paths and involutions that carries the statistic level step to fixed point.

We hope the notion of colored Motzkin paths can shed some new light
on the study of standard Young tableaux.

\section*{Acknowledgements.}
The authors thank the referees for carefully reading the manuscript
and providing helpful suggestions. The authors also thank Ting-Yuan
Cheng for stimulating discussions.


\begin{thebibliography}{99}
\bibitem{Chen-Deng} W. Chen, E. Deng, R. Du, R. Stanley, C. Yan, Crossings and nestings of matchings and partitions, Trans. Amer. Math. Soc.
359 (4) (2007) 1555--1575.

\bibitem{Deutsch-Shapiro} E. Deutsch, L.W. Shapiro, A bijection between ordered trees and 2-Motzkin
paths and its many consequences, Discrete Math. 256 (2002) 655--670.

\bibitem{Eu} S.-P. Eu, Skew-standard tableaux with three rows, Adv. in Appl. Math. 45 (2010) 463--469.

\bibitem{Frame_54} J. S. Frame, G. de B. Robinson, R. M. Thrall, The hook graphs of the symmetric group, Canad. J. Math. 6 (1954) 316--325.


\bibitem{Gessel} I. Gessel, Symmetric functions and P-recursiveness, J. Combin. Theory Ser. A 53 (1990) 257--285.

\bibitem{Gouyou} D. Gouyou-Beauchamps, Standard Young tableaux of height 4 and 5, European J. Combin. 10 (1989) 69--82.

\bibitem{Grab-Mag} D.J. Grabiner, P. Magyar, Random walks in Weyl chambers and the decomposition of tensor
powers. J. Algebraic Combin. 2 (1993) 239--260.



\bibitem{Regev-1} A. Regev, Asymptotic values for degrees associated with strips of Young diagrams, Adv. Math. 41
(1981) 115--136.

\bibitem{Regev-2} A. Regev, Probabilities in the $(k,\ell)$ hook, Israel J. Math. 169 (2009) 61--88.

\bibitem{Sagan} B. Sagan, The symmetric group: Representations, Combinatorial Algorithms, and Symmetric Functions, 2nd
edition, Springer, New York, 2001.

\bibitem{Stanley-EC2} R. Stanley, Enumerative Combinatorics, Vol. 2, Cambridge Stud. in Adv. Math., Vol. 62,
Cambridge University Press, Cambridge, 1999.

\bibitem{Stanley-paper} R. Stanley, Increasing and decreasing subsequences of permutations and their variants, Proc. Internat. Cong. Math., Madrid,
2006, arXiv:math.co/0512035.

\bibitem{Xin} G. Xin, Determinant formulas relating to tableaux of bounded height, Adv. in Appl. Math. 45 (2010) 197--211.

\end{thebibliography}
\end{document}